\def\qmod#1#2{{\hbox{}^{\displaystyle{#1}}}\!\big/\!\hbox{}_{
\displaystyle{#2}}}
\def\resto#1#2{{
#1\hskip 0.4ex\vline_{\hskip 0.2ex\raisebox{-0,2ex}
{{${\scriptstyle #2}$}}}}}
\def\C{{\mathbb C}}
\def\H{{\mathbb H}}
\def\R{{\mathbb R}}
\newcommand{\cal}{\mathcal}
\def\textlmap#1{\mathop{\vbox{\ialign{
                                  ##\crcr
      ${\scriptstyle\hfil\;\;#1\;\;\hfil}$\crcr
      \noalign{\kern-1pt\nointerlineskip}
      \leftarrowfill\crcr}}\;}}
\def\ag{{\mathfrak a}}
\def\g{{\mathfrak g}}
\def\hg{{\mathfrak h}}
\def\kg{{\mathfrak k}}
\def\sg{{\mathfrak s}}
\def\ug{{\mathfrak u}}
\def\vg{{\mathfrak v}}
\def\lg{{\mathfrak l}}
\def\Fg{{\mathfrak F}}
\def\Pg{{\mathfrak P}}
\newtheorem{sz}{Satz}
\newtheorem{thry}[sz]{Theorem}
\newtheorem{pr}[sz]{Proposition}
\newtheorem{re}[sz]{Remark}
\newtheorem{co}[sz]{Corollary}
\newtheorem{dt}[sz]{Definition}
\newtheorem{lm}[sz]{Lemma}
\def\End{\mathrm {End}}
\def\Aut{\mathrm {Aut}}
\def\SO{\mathrm {SO}}
\def\GL{\mathrm {GL}}
\def\Hom{\mathrm{Hom}}
\def\id{ \mathrm{id}}
\def\im{\mathrm{im}}
\def\Ad{\mathrm {Ad}}
\def\ad{{\rm ad}}
\begin{document}

\title[Invariant connections, invariant holomorphic bundles]{Invariant
connections and invariant holomorphic bundles on homogeneous manifolds} 

\author{Indranil Biswas}

\address{School of Mathematics, Tata Institute of Fundamental
Research, Homi Bhabha Road, Bombay 400005, India}

\email{indranil@math.tifr.res.in}

\author{Andrei Teleman}

\address{CMI, LATP, Aix-Marseille Universit\'e, 39 Rue F. Joliot-Curie,
F-13453 Marseille Cedex 13, France}

\email{teleman@cmi.univ-mrs.fr}

\subjclass[2000]{53B35, 53C05, 32L05}

\keywords{Invariant connection, gauge group, principal bundle}

\begin{abstract}
Let $X$ be a differentiable manifold endowed with a transitive action
$\alpha:A\times X\longrightarrow X$ of a Lie group $A$. Let $K$ be a Lie group.
Under suitable technical assumptions, we give explicit classification theorems, in
terms of explicit finite dimensional quotients, of three classes of objects:
\begin{enumerate}
\item equivalence classes of $\alpha$-invariant $K$-connections on $X$,
\item $\alpha$-invariant gauge classes of $K$-connections on $X$, and
\item $\alpha$-invariant isomorphism classes of pairs $(Q,P)$ 
consisting of a holomorphic $K^\C$-bundle $Q\longrightarrow X$ and a 
$K$-reduction $P$ of $Q$ (when $X$ has an $\alpha$-invariant 
complex structure).
\end{enumerate}

\end{abstract}

\thanks{The authors are grateful to the referee for his comments which contributed to improve the quality of the paper.}
 \thanks{The second author has been partially supported by the ANR project MNGNK, decision 
Nr.  ANR-10-BLAN-0118}

\maketitle

\section{Introduction. Invariant gauge classes of connections}\label{intro}

Hermitian holomorphic bundles on the upper half-plane, which are ${\rm SL}(2,
\R)$-invariant up to isomorphism, can be classified \cite{Bi1}. Our starting
point is the observation of the second author that this problem 
can be reformulated and generalized using ideas from gauge theory.
 
Let $X$ be a connected manifold, $A$ a {\it connected} Lie group, and
$$\alpha:A\times X\longrightarrow X$$ a smooth action. For
any $a\in A$, the diffeomorphism $x \longmapsto
\alpha(a,x)$ of $X$ will be denoted by $f_a$. Let $K$ be a Lie group 
with Lie algebra $\kg$, and let $p:P\longrightarrow X$ be a principal 
$K$-bundle over $X$. Since $A$ is connected, it follows that
$f_a^*(P)\simeq P$ for every $a\in A$. This follows from the homotopy invariance of  pull-backs (see \cite{Hu} Theorem 9.9). Therefore the isomorphism type of $P$ is $\alpha$-invariant.

\begin{dt} \label{inv}
{\rm A connection $\Gamma$ on $P$ will be called} gauge 
$\alpha$-invariant {\rm if for every $a\in A$ there is an 
$\id_X$-covering principal $K$-bundle isomorphism 
$\phi:P\longrightarrow f_a^*(P)$ such that 
$\phi^*(f_a^*(\Gamma))=\Gamma$.}
\end{dt}

The above condition depends only on the image of $A$ in the 
diffeomorphism group $\Aut(X)$, so $\Gamma$ is gauge 
$\alpha$-invariant if and only if it is invariant with respect to 
the natural action of the image of $A$ in this group. Note that 
an $\id_X$-covering bundle isomorphism $P\longrightarrow 
f_a^*(P)$ can be regarded as an $f_a$-covering bundle isomorphism 
$P\longrightarrow P$. 

There exists an interesting alternative interpretation of this 
definition using ideas from gauge theory: Let $p:P 
\longrightarrow X$ be an arbitrary principal $K$-bundle on a manifold $X$ and ${\cal K}_P$ 
its gauge group, i.e., the group $\Aut(P)$ of $\id_X$-covering 
bundle automorphisms of $P$ commuting with the action of $K$. 
This group can be identified with the group 
of $C^\infty$ sections of the group bundle $P\times_\Ad K$, where 
$$\Ad:K\longrightarrow \Aut(K)$$ is the adjoint action 
$(k,l)\longmapsto klk^{-1}$ of $K$ on itself.

We recall that two connections $\Gamma$, $\Gamma'$ on $P$ are 
called gauge equivalent if there exists an $\varphi\in{\cal 
K}_P$ such that $\Gamma'=\varphi^*(\Gamma)$, meaning they are 
conjugate modulo the natural action of the gauge group ${\cal 
K}_P$ on the affine space ${\cal A}(P)$ of connections on $P$. 
The moduli space of connections on $P$ is the quotient
$${\cal B}(P)\,:=\,{\cal A}(P)/{\cal K}_P\ ,
$$
endowed with the quotient topology of the ${\cal C}^\infty$-topology on the affine space ${\cal A}(P)$ \cite{DK}. 
Note that if two principal $K$-bundles $p:P\longrightarrow X$ and
$q:Q\longrightarrow X$ are 
isomorphic (as principal $K$-bundles over $X$), then there exists a 
{\it canonical} identification $${\cal B}(P)\,=\,{\cal B}(Q)\, ,$$ because 
any two $\id_X$-covering isomorphisms $P\longrightarrow Q$ differ 
by the 
composition with a gauge transformation of $P$. Therefore, we can 
define in a coherent way the moduli space ${\cal B}(\Pg)$, where 
$\Pg$ is an isomorphism class of principal $K$-bundles over $X$. Formally ${\cal B}(\Pg)$ is the disjoint union $\coprod_{P\in\Pg}{\cal B}(P)$ factorized by the equivalence
relation generated by the canonical identifications ${\cal B}(P)={\cal B}(Q)$ mentioned above.

Since $f_a^*(P)\simeq P$ for every $a\in A$, the 
isomorphism type $\Pg$ of $P$ is a fixed point under that natural 
action of $A$ on the set of isomorphism types of $K$-bundles on 
$X$. So the moduli space ${\cal B}(\Pg)$ associated with the 
isomorphism type of $P$ comes with a well-defined right $A$-action given by
$$([\Gamma],a)\,\longmapsto\, [f_a^*(\Gamma)]\,\in\, {\cal 
B}(f_a^*(P))\,=\,{\cal B}(\Pg) \, ,$$
where $\Gamma\,\in\,{\cal A}(P)$. Definition \ref{inv} can be reformulated as follows:

\begin{re}\label{rem} {\rm A connection $\Gamma$ on $P$ is 
gauge $\alpha$-invariant if and only if its gauge class 
$[\Gamma]\in{\cal B}(P)\,=\,{\cal B}(\Pg)$ is a fixed point with
respect to the natural $A$-action on ${\cal B}(\Pg)$.}
\end{re}

Our first goal is to describe the set of all classes of gauge 
$\alpha$-invariant connections on $K$-bundles on $X$. More precisely let ${\cal B}_{K}(X)$
be the union
$${\cal B}_{K}(X)\,:=\,\coprod_{\substack{\Pg \ {\rm isomorphism\ type}\\
{\rm of\ } K-{\rm bundles\ on\ }X }} {\cal B}(\Pg)\ ,
$$
so it is the {\it moduli space of all gauge equivalent classes of connections in
$K$-bundles on $X$}. This moduli space comes with a natural $A$-action given by
$$
 {\cal B}(\Pg)\times A\,\ni\, ([\Gamma],a)\longmapsto 
[f_a^*(\Gamma)]\,\in\, {\cal B}(f_a^*(\Pg))\ .
$$

Using this formalism, we see that our first goal is to describe 
the space of $A$-invariant elements in ${\cal B}_{K}(X)$, in 
other words, to describe the fixed point set:
\begin{equation}\label{e1}
{\cal F}_{\alpha,K}\, :=\, \{\gamma\in{\cal B}_{K}(X)|\ 
f_a^*(\gamma)=\gamma\ \forall\, a\in A\}\subset{\cal B}_{K}(X)\ .
\end{equation}

The elements of the above set correspond bijectively to the 
equivalence classes of pairs $(P,\Gamma)$, where $P$ is a 
principal $K$-bundle on $X$, and $\Gamma$ is a gauge 
$\alpha$-invariant connection on $P$; the equivalence relation 
is defined by $\id_X$-covering bundle isomorphisms. The elements of this set will be called $\alpha$-{\it invariant gauge classes of $K$-connections on $X$.} When $A$ is a subgroup of the diffeomorphism group $\Aut(X)$ (or, equivalently, when
the action $A$ is effective), we will also say $A$-invariant gauge classes of
$K$-connections on $X$.

We will study this problem in detail in the particular case 
where $K$ is compact and $A$ acts transitively on $X$. 

Our second goal concerns the case when $X$ has an $A$-invariant complex
structure. In this case we will be interested in the subset
${\cal F}_{\alpha,K}^{1,1}\,\subset\, {\cal F}_{\alpha,K}$
of $\alpha$-invariant gauge classes of {\it type} $(1,1)$ connections on 
$K$-bundles on $X$. We recall that, for a compact Lie group $K$, a
connection on a principal $K$-bundle $P$ on a complex manifold 
$X$ is of type $(1,1)$ if its curvature $F_\Gamma$ is of Hodge type $(1,1)$. 
This condition is equivalent to the condition that the almost 
complex structure on the complexified bundle $Q\,:=\,P\times_K K^\C$
associated with $\Gamma$ via the Chern correspondence (see for 
instance \cite[Section 7.1]{LT}) is integrable. In other words, 
denoting by $G$ the complex reductive group $K^\C$, we see that the elements
of ${\cal F}_{\alpha,K}^{1,1}$ correspond bijectively to equivalence classes of
holomorphic principal $G$-bundles $Q$ on $X$ endowed with a $K$-reduction, with the
equivalence relation defined by $\id_X$-covering holomorphic isomorphisms which
respect the $K$-reductions. Equivalently, two pairs $(Q,P)$ and $(Q',P')$ consisting
of holomorphic $G$-bundles endowed with $K$-reductions are considered equivalent
if they are {\it holomorphically isometric}, meaning there is a holomorphic isomorphism
$Q\,\longrightarrow\, Q'$ of principal $G$-bundles that takes $P$ to $P'$.

\section{Equivariant bundles and invariant connections}

Let $\alpha\,:\,A\times X\,\longrightarrow \,X$ be a smooth action of a 
connected Lie group $A$ on a connected smooth manifold $X$. For
$a\,\in\, A$, denote by $f_a\,:\,X\,\longrightarrow \,X$ the diffeomorphism 
$x\,\longmapsto\, \alpha(x,a)$. Let $K$ be a connected Lie group.

\begin{dt}\label{Kalpha}
{\rm A} principal $(K,\alpha)$-bundle {\rm over 
$X$ is a pair $(P,\beta)$ consisting of a principal $K$-bundle 
$p\,:\,P\,\longrightarrow\, X$ on $X$ and an action $\beta\,:\,A\times P
\,\longrightarrow\, P$ such that for every $a\in A$, the corresponding 
diffeomorphism $\beta_a\,:\,P\,\longrightarrow\, P$, $z\,\longmapsto\,
\beta(a,z)$, is an $f_a$-covering isomorphism of principal $K$-bundles.}
\end{dt}

In other words, $\beta$ is an $\alpha$-covering action by 
principal $K$-bundle 
isomorphisms. According to the terminology used in the 
literature, a pair $(P,\beta)$ as in Definition \ref{Kalpha} is 
also called an $\alpha$-equivariant (or an $A$-equivariant) principal
$K$-bundle over the $A$-manifold $(X,\alpha)$.

\begin{dt}\label{iso}
{\rm Let $(P,\beta)$ and $(P',\beta')$ be two principal
$(K,\alpha)$-bundles over $X$. An} isomorphism 
$$(P,\beta)\longrightarrow (P',\beta')$$ {\rm is an 
$\id_X$-covering $K$-bundle isomorphism that commutes with the 
$A$-actions $\beta$ and $\beta'$ on $P$ and $P'$ respectively.}
\end{dt}

\begin{dt}\label{invv} {\rm Let $(P,\beta)$ be a principal 
$(K,\alpha)$-bundle. A connection $\Gamma$ on $P$ is called}
invariant {\rm if $\beta_a^*(\Gamma)\,=\,\Gamma$ for every $a\,\in\, A$.
In this case, we will also say that $\Gamma$ is a $\beta$-invariant connection on $P$.

An} $\alpha$-invariant $K$-connection {\rm on $X$ is a triple 
$(P,\beta,\Gamma)$, where $(P,\beta)$ is a principal 
$(K,\alpha)$-bundle on $X$, and $\Gamma$ is an 
invariant connection on $P$.}
\end{dt}

Let $(P,\beta)$ be a principal $(K,\alpha)$-bundle. Let
${\cal A}(P)$ be the space of all connections on $P$ and
${\cal A}(P)^\beta\, \subset\, {\cal A}(P)$ the subspace of invariant 
connections on $(P,\beta)$. Let $A^1(\ad(P))^\beta$ denote
the $\beta$-invariant $\ad(P)$-valued 1-forms.
We will need
an explicit description of the space ${\cal A}(P)^\beta$, 
supposing that it is non-empty. The difference $\Gamma'-\Gamma$ 
of two invariant connections is an element of 
$A^1(\ad(P))^\beta$. Conversely, for $\Gamma\,\in\,
{\cal A}(P)^\beta$ and $\omega\,\in\, A^1(\ad(P))^\beta$,
clearly $\Gamma+\omega\,\in\, {\cal A}(P)^\beta$.
Therefore, if non-empty, the space 
${\cal A}(P)^\beta$ has a natural affine space structure
over $A^1(\ad(P))^\beta$. We recall that, denoting by $\kg$ the Lie algebra of $K$, the space $A^1(\ad(P))$ 
can be identified with the space $A^1_\ad(P,\kg)$ of $\kg$-valued 
tensorial 1-forms of type $\ad$ on $P$ (defined in \cite[p. 
75]{KN}); see \cite[p. 76, Example 5.2]{KN} for this 
identification.

Fix now $x_0\,\in\, X$ and denote by $H_0$ the stabilizer of $x_0$ in 
$A$. Choose $y_0\,\in \,P_{x_0}$, and note that $H_0$ acts on the 
fiber $P_{x_0}$ via $\beta$. We define the map 
$\chi_{y_0}\,:\,H_0\,\longrightarrow\, K$ by
\begin{equation}\label{e2}
\phi_h(y_0)\,:=\,\beta(h,y_0)\,=\,y_0(\chi_{y_0}(h))\ .
\end{equation}
This map is a group homomorphism because for $h$, $k\,\in\, H_0$ 
we have
$$y_0(\chi_{y_0}(kh))\,=\,\beta(kh,y_0)=\beta(k,\beta(h,y_0))\,=\,
\beta(k,y_0(\chi_{y_0}(h)))\,=\,
$$
$$
= \, \beta(k,y_0)(\chi_{y_0}(h))\,=\, y_0(\chi_{y_0}(k))(\chi_{y_0}(h))\, .
$$
It is easy to check that, replacing $y_0$ by $y_0 k$ for an element $k\,\in\, K$, we have
$$\chi_{y_0 k}\,=\,k^{-1}\chi_{y_0} k\,=\,\iota_{k^{-1}}\circ \chi_{y_0}\ ,
$$
so $\chi_{y_0}$ is well defined up to conjugation by an element of $K$.
In this formula we used the notation $\iota_k$ for the inner automorphism defined by $k$.

Let $\eta\,\in \,A^1_\ad(P,\kg)^\beta$ be a $\beta$-invariant tensorial form of 
type $\ad$. Its restriction $\eta_{y_0}$ to $T_{y_0}(P)$ descends 
to a linear map $\eta^{y_0}\,:\,T_{x_0}(X)\,\longrightarrow\,\kg$ given 
by $\eta^{y_0}(\xi)\,:=\,\eta_{y_0}(\zeta)$, where $\zeta$ is an 
arbitrary lift of $\xi$ in $T_{y_0}P$. For a tangent vector 
$\xi\,\in\, T_{x_0}(X)$ and a lift $\zeta\in T_{y_0}(P)$ of $\xi$, 
we have $p_*(\beta_{h*}(\zeta))\,=\,f_{h*}(\xi)$ for all $h\,\in\, H_0$.
So $\beta_{h*}(\zeta)$ (and every right translation of it) is a lift
of $f_{h*}(\xi)$. Since $\eta$ is $\beta$-invariant and $\ad$-tensorial we obtain
$$\eta^{y_0}(\xi)\,=\,\eta_{y_0}(\zeta)\,=\,\eta_{\phi_h(y_0)}(\phi_{h*}(\zeta))\,
=\, \eta_{y_0\chi_{y_0}(h)}(\phi_{h*}(\zeta))\,=
$$
$$
=\,R_{\chi_{y_0}(h)}^*(\eta)(R_{\chi_{y_0}(h)*}^{-1}(\phi_{h*}(\zeta)))\,=\,
\ad_{\chi_{y_0}(h)}^{-1}(\eta_{y_0}(R_{\chi_{y_0}(h)*}^{-1}(\phi_{h*}(\zeta))))\,=$$
$$=\,\ad_{\chi_{y_0}(h)}^{-1}(\eta^{y_0}(f_{h*}(\xi))\ ,$$
where (as in \cite{KN}) $R_k$ stands for the right translation $P\,\longrightarrow\,
P$ associated with an element $k\,\in\, K$. Therefore $\eta^{y_0}$ must satisfy the identity
\begin{equation}
\eta^{y_0}(f_{h*}(\xi))\,=\,\ad_{\chi_{y_0}(h)}(\eta^{y_0}(\xi))\ 
\forall\, \xi\in T_{x_0}(X)\, ,\ \forall\, h\,\in\, H_0\ .
\end{equation}

Composing $\eta^{y_0}$ with the derivative 
$\alpha_{x_0*}\,:\,\ag\,\longrightarrow \,T_{x_0}(X)$ at $e\,\in\, A$ of 
the map $a\,\longmapsto\, f_a(x_0)$ we get a linear map
$$\mu^{y_0}\,:=\,\eta^{y_0}\circ\alpha_{x_0*}\,:\,\ag\, 
\longrightarrow \,\kg
$$
satisfying the properties
\begin{enumerate}
\item $$\resto{\mu^{y_0}}{\hg_0}\,=\,0\, ,$$
\item $$\mu^{y_0}\circ \ad_h\,=\,
\ad_{\chi_{y_0}(h)}\circ \mu^{y_0} \ \forall\, h\,\in\, H_0\, . 
$$
\end{enumerate}

The following result is a consequence of Wang's classification theorem for invariant connections on a principal bundle with respect to a "fibre-transitive" action (see \cite{W}, \cite{KN} Theorem 11.5, Theorem 11.7). We include a short proof for completeness.  

\begin{lm}\label{invtensorial} Let $(P,\beta)$ be a 
$(K,\alpha)$-bundle. Choose $x_0\,\in\, X$, $y_0\,\in\, P_{x_0}$, and 
let $\chi_{y_0}\,:\,H_0\,\longrightarrow\, K$ be the associated group 
morphism. If $\alpha$ is transitive, then the map 
$$s_{y_0}\,:\,\eta\,\longmapsto\, 
\mu^{y_0}\,:=\,\eta^{y_0}\circ\alpha_{x_0*}$$
defines an isomorphism between the space $A^1_{\ad}(P,\kg)^\beta$ of
$\beta$-invariant tensorial 1-forms of type $\ad$ on $P$, and the subspace 
$$S_{y_0}:=\{\mu\in \Hom(\ag,\kg)\,|\ \resto{\mu}{\hg_0}=0 ,\ 
\mu \circ \ad_h =\ad_{\chi_{y_0}(h)}\circ \mu \ \forall\, h\in H_0\}\subset \Hom(\ag,\kg)\ .$$
 
For fixed $x_0$, the space $S_{y_0}$ and the isomorphism 
$s_{y_0}: 
A^1_{\ad}(P,\kg)^\beta\,\longrightarrow\, S_{y_0}$ depend on 
$y_0\in P_{x_0}$ according to the formula
$$S_{y_0k}\,=\,\ad_{k^{-1}}(S_{y_0})\ ,\ s_{y_0k}=\ad_{k^{-1}}\circ s_{y_0}\ \forall\,
k\,\in\, K\ .
$$
\end{lm}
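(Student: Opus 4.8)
The plan is to show that $s_{y_0}$ is a well-defined linear map into $S_{y_0}$, that it is injective, and that it is surjective, the surjectivity being the only substantial point.

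First I would observe that the computations carried out in the paragraphs preceding the statement already establish that for any $\eta\in A^1_\ad(P,\kg)^\beta$ the map $\mu^{y_0}:=\eta^{y_0}\circ\alpha_{x_0*}$ does land in $S_{y_0}$: property (1) follows because $\alpha_{x_0*}$ kills $\hg_0$ (the stabilizer Lie algebra maps to zero under $a\mapsto f_a(x_0)$), and property (2) is exactly the displayed identity $\eta^{y_0}(f_{h*}(\xi))=\ad_{\chi_{y_0}(h)}(\eta^{y_0}(\xi))$ composed with $\alpha_{x_0*}$, using the equivariance $\alpha_{x_0*}\circ\ad_h=f_{h*}\circ\alpha_{x_0*}$. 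Linearity of $s_{y_0}$ in $\eta$ is immediate. So the content is bijectivity.

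For injectivity: an invariant tensorial form $\eta$ is determined by its value $\eta^{y_0}$ on $T_{x_0}(X)$, because transitivity of $\alpha$ lets one transport $y_0$ to any point of $P$ by combining the $\beta$-action with right translations, and $\beta$-invariance together with $\ad$-tensoriality then forces $\eta$ everywhere; hence if $\mu^{y_0}=0$ then, since $\alpha_{x_0*}:\ag\to T_{x_0}(X)$ is surjective (transitivity), $\eta^{y_0}=0$, so $\eta=0$. For surjectivity, the main point: given $\mu\in S_{y_0}$, first factor it through $T_{x_0}(X)$. Because $\ker(\alpha_{x_0*})=\hg_0$ and $\resto{\mu}{\hg_0}=0$, $\mu$ descends to a well-defined linear map $\eta^{y_0}:T_{x_0}(X)\to\kg$ with $\mu=\eta^{y_0}\circ\alpha_{x_0*}$, and property (2) of $\mu$ translates into the equivariance $\eta^{y_0}\circ f_{h*}=\ad_{\chi_{y_0}(h)}\circ\eta^{y_0}$ on $T_{x_0}(X)$. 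Then I would build $\eta$ on all of $P$ as follows: a general point of $P$ is of the form $\beta_a(y_0 k)$ for some $a\in A$, $k\in K$ (by transitivity of $\alpha$ and of the $K$-action on fibres), and a general tangent vector there can be written in terms of a tangent vector to $X$ at $f_a(x_0)$ plus a vertical part; one decrees $\eta$ on the vertical part by the tensoriality rule and on a horizontal lift by pushing back via $\beta_a$ and $R_k$ to $T_{y_0}(P)$ and applying $\eta^{y_0}$. The equivariance of $\eta^{y_0}$ under $H_0$ (property (2), now in its infinitesimal-to-global form through $\chi_{y_0}$) is precisely what guarantees this is independent of the choices of $a$ and $k$ — this is the verification I expect to be the genuine obstacle, and it is exactly the standard Wang-type argument (cf. \cite{W}, \cite[Theorem 11.5]{KN}). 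One checks smoothness of the resulting $\eta$ using local sections of $A\to X$, and that $\eta$ is $\ad$-tensorial and $\beta$-invariant by construction, with $s_{y_0}(\eta)=\mu$.

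Finally, the dependence on $y_0$ is a direct computation: replacing $y_0$ by $y_0k$ changes $\chi_{y_0}$ to $\iota_{k^{-1}}\circ\chi_{y_0}$, as already recorded, hence changes the defining equations of $S_{y_0}$ by conjugating with $\ad_{k^{-1}}$, giving $S_{y_0k}=\ad_{k^{-1}}(S_{y_0})$; and since $\eta^{y_0k}(\xi)=\eta_{y_0k}(R_{k*}\zeta)=\ad_{k^{-1}}(\eta_{y_0}(\zeta))=\ad_{k^{-1}}(\eta^{y_0}(\xi))$ by $\ad$-tensoriality of $\eta$, we get $s_{y_0k}=\ad_{k^{-1}}\circ s_{y_0}$, which is compatible with the inclusion $S_{y_0k}=\ad_{k^{-1}}(S_{y_0})$ and completes the proof.
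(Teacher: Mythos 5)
Your proposal is correct and follows essentially the same route as the paper: the forward direction and the $y_0$-dependence are exactly the computations recorded before the statement, and your surjectivity argument --- writing a general point as $\beta_a(y_0k)$ and checking that the two conditions on $\mu$ make the extension independent of the representation --- is precisely the Wang-type construction the paper gives. You merely spell out the injectivity and smoothness points that the paper leaves implicit.
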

 
\begin{proof}
Given a linear map $\mu\,:\,\ag\,\longrightarrow\, \kg$ satisfying the
above two conditions we obtain easily a linear map 
$\eta_{y_0}\,:\,T_{y_0}(P)\,\longrightarrow\, \kg$ vanishing on the 
vertical tangent space at $y_0$. Using the right 
$K$-equivariance and left $A$-invariance property of the 
tensorial forms of type $\ad$, and the transitivity assumption,
we can extend this form to all 
tangent spaces $T_y(P)$. The two properties 
$$\resto{\mu}{\hg_0}\,=\,0 ,\ \mu \circ \ad_h =\ad_{\chi_{y_0}(h)}\circ \mu \ \forall\,
h\,\in \,H_0
$$
ensure that this extension is well-defined, meaning for 
$y\,\in\, P$, the resulting linear map $T_y(P)\,\longrightarrow\, \kg$ 
does not depend on the representation $y\,=\,\phi_a (y_0) k$, 
with $a\,\in\, A$ and $k\,\in\, K$.
\end{proof}

\begin{re}\label{interpret} {\rm The condition $\resto{\mu}{\hg_0}
\,= \,0$ means that the linear map $\mu\,:\,\ag\,\longrightarrow\,\kg$ 
descends to a linear map $\ag/\hg_0\,\longrightarrow\, \kg$. This map will 
also be denoted by $\mu$. The condition
$$\mu \circ \ad_h \,=\,\ad_{\chi_{y_0}(h)}\circ\mu$$
in the 
definition of the space $S_{y_0}$ has a very natural 
interpretation: it means that $\mu$ is a morphism of $H_0$ 
spaces, where $\ag/\hg_0$ is considered as a $H_0$-space via 
the adjoint representation of the subgroup $H_0\,\subset\, A$, and 
$\kg$ is considered a $H_0$-space via $\ad\circ\chi_{y_0}$.}
\end{re}

\begin{co} \label{finiteaffine}
Let $(P,\beta)$ be a $(K,\alpha)$-bundle. Suppose that $\alpha$ 
is transitive and that $P$ admits a $\beta$-invariant 
connection. Choose $x_0\in X$, $y_0\,\in\, P_{x_0}$, and let 
$$\chi_{y_0}\,:\,H_0\,\longrightarrow\, K$$ be the associated group 
morphism. Then the space ${\cal A}(P)^\beta$ of invariant 
connections on $(P,\beta)$ is naturally an affine space over the
finite dimensional space $S_{y_0}\,\subset\,\Hom(\ag/\hg_0,\kg)$.
\end{co}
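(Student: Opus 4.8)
The plan is to deduce Corollary~\ref{finiteaffine} directly from Lemma~\ref{invtensorial} together with the affine-space discussion preceding it. First I would recall that, by the paragraph before Lemma~\ref{invtensorial}, once ${\cal A}(P)^\beta$ is non-empty it carries a canonical affine-space structure modelled on the vector space $A^1(\ad(P))^\beta$, and that the latter is identified with $A^1_\ad(P,\kg)^\beta$ via the standard correspondence in \cite[p.~76, Example~5.2]{KN}. The hypothesis that $P$ admits a $\beta$-invariant connection is exactly what guarantees ${\cal A}(P)^\beta\neq\emptyset$, so this affine structure is available.

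Next I would invoke Lemma~\ref{invtensorial}: since $\alpha$ is transitive, the map $s_{y_0}$ is a linear isomorphism $A^1_\ad(P,\kg)^\beta\,\xrightarrow{\ \sim\ }\,S_{y_0}$. Composing the two identifications gives a linear isomorphism between the model vector space of ${\cal A}(P)^\beta$ and $S_{y_0}$; transporting the affine structure along this isomorphism exhibits ${\cal A}(P)^\beta$ as an affine space over $S_{y_0}$. Concretely, fixing a base point $\Gamma_0\in{\cal A}(P)^\beta$ and writing each invariant connection as $\Gamma_0+\omega$ with $\omega\in A^1(\ad(P))^\beta$, the assignment $\Gamma_0+\omega\mapsto s_{y_0}(\omega)$ is the required affine chart ${\cal A}(P)^\beta\to S_{y_0}$.

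Finally I would note that $S_{y_0}$ is finite dimensional: by Remark~\ref{interpret} it is (isomorphic to) the space of $H_0$-equivariant linear maps $\ag/\hg_0\to\kg$, hence a linear subspace of the finite-dimensional space $\Hom(\ag/\hg_0,\kg)$. This establishes every assertion of the corollary. There is essentially no obstacle here — the statement is a formal consequence of Lemma~\ref{invtensorial} and the affine-space remarks — so the only thing worth spelling out is the naturality/independence under the choice of base point, which follows from the transformation rule $s_{y_0k}=\ad_{k^{-1}}\circ s_{y_0}$, $S_{y_0k}=\ad_{k^{-1}}(S_{y_0})$ recorded in the lemma: changing $y_0$ replaces the chart by its composition with the linear automorphism $\ad_{k^{-1}}$, so the affine structure on ${\cal A}(P)^\beta$ is intrinsic.
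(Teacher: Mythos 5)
Your proposal is correct and follows exactly the route the paper intends: the corollary is stated without a separate proof precisely because it is the formal combination of the affine-space remark preceding Lemma~\ref{invtensorial} (nonemptiness of ${\cal A}(P)^\beta$ giving an affine structure over $A^1(\ad(P))^\beta\simeq A^1_\ad(P,\kg)^\beta$) with the isomorphism $s_{y_0}$ onto the finite-dimensional subspace $S_{y_0}\subset\Hom(\ag/\hg_0,\kg)$. Your additional observation about the $\ad_{k^{-1}}$-equivariance under change of $y_0$ is a welcome clarification of the word ``naturally'' but introduces nothing beyond what the lemma already records.
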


Our next goal is the classification of $\beta$-invariant
connections on different bundles of type $(K,\alpha)$ up to 
{\it equivalence}.
 
\begin{dt}\label{equivInConn}
{\rm Two $\alpha$-invariant connections $(P,\beta,\Gamma)$, 
$(P',\beta',\Gamma')$ on $X$ are called} equivalent {\rm if 
there is an isomorphism $\phi\,:\,(P,\beta)\,\longrightarrow\, (P',\beta')$ 
of $(K,\alpha)$-bundles such that $\phi^*(\Gamma')\,=\,\Gamma$.}
\end{dt}

We denote by $\Phi_{\alpha,K}$ the set of isomorphism classes 
of $\alpha$-invariant connections. Since the isomorphism class 
of a $\alpha$-invariant connection $\Gamma$ is preserved by
gauge transformations commuting with $\alpha$, we obtain an 
obvious {\it comparison map}
$$\rho_{\alpha,K}\,:\,\Phi_{\alpha,K}\,\longrightarrow\, {\cal 
F}_{\alpha,K}\ ,\ [P,\beta,\Gamma]\,\longmapsto\, [\Gamma]
$$
which will be used in the next section to understand the 
set ${\cal F}_{\alpha,K}$ in \eqref{e1}. 
Intuitively, the comparison map $\rho_{\alpha,K}$ relates 
{\it equivalence classes of invariant connections} to 
{\it invariant equivalence (gauge) classes of connections. } 
Whereas the right hand set ${\cal F}_{\alpha,K}$ depends only 
on the image of $A$ in $\Aut(X)$, the left hand set 
$\Phi_{\alpha,K}$ depends effectively on the action $\alpha$. 
In particular, replacing $\alpha$ by the induced action 
$\widetilde\alpha$ of the universal cover $\widetilde A$ of $A$ we 
will get a comparison map
$$\rho_{{\widetilde\alpha},K}\,:\,\Phi_{{\widetilde \alpha},K}
\,\longrightarrow\, {\cal F}_{{\widetilde\alpha},K}={\cal F}_{\alpha,K}
$$
which will play an important role in the next section.
\\

We will now give an explicit description of the set
$\Phi_{\alpha,K}$ in the special case where $\alpha$ is 
transitive and the principal $H_0$-bundle $q\,:\,A\,\longrightarrow 
\,X$, $a\,\longmapsto\, \alpha(a,x_0)$, where $H_0$ is the stabilizer
of $x_0$, has an invariant connection. We will see that 
this condition has a simple interpretation and is satisfied for a large class of
interesting examples.

{}From now on, throughout this section, we will suppose that 
$\alpha$ is transitive. We will regard the composition $\lambda\,:\,A\times A
\,\longrightarrow\, A$ as a
left-translation action of $A$ on itself. Note that $(A, 
\lambda)$ is a principal $(H_0,\alpha)$-bundle over $X$, 
i.e., $\lambda$ is an $\alpha$-covering action by bundle 
isomorphisms (see Definition \ref{Kalpha}). This pair should 
be regarded as a {\it tautological} equivariant bundle over 
$X$, because it was constructed using only the pointed manifold 
$(X,x_0)$ and the transitive action $\alpha$ on $X$. 

This tautological equivariant bundle has an important role in 
our constructions, because we will see that {\it any} 
$(K,\alpha)$-bundle $(P,\beta)$ over $X$ (for an arbitrary Lie 
group $K$) can be regarded, in an essentially well defined way, 
as a bundle associated with the tautological equivariant bundle 
$(A,\lambda)$ over $X$. This simple remark will allow us to 
construct invariant connections on every $(K,\alpha)$-bundle 
$(P,\beta)$ over $X$, starting with an invariant connection on 
this tautological equivariant bundle.
 
Let $(P,\beta)$ be a $(K,\alpha)$-bundle over $X$. Choose 
$y_0\,\in\, P_{x_0}$, and consider the homomorphism
$\chi_{y_0}\,:\,H\,\longrightarrow\, K$ in \eqref{e2}.
The map $$\psi_{y_0}\,:\ A\,\longrightarrow\, P$$ given by 
$\psi_{y_0}(a)\,:=\,\beta(a,y_0)$ is an $\id_X$-covering 
principal bundle 
morphism of type $\chi_{y_0}\,:\, H_0\,\longrightarrow\, K$, because 
for $a\,\in\, A$ and $h\,\in\ H_0$ we have
$$\psi_{y_0} (a h)\,=\,\beta(ah,y_0)\,=\,\beta(a,\beta(h,y_0))\,=\, 
\beta(a,y_0\chi_{y_0}(h))$$
$$=\,\beta(a,y_0)(\chi_{y_0}(h))\,=\,\psi_{y_0}(a)(\chi_{y_0}(h))
$$
by the definition of $\chi_{y_0}$. We refer to \cite{KN}
for the concept of bundle morphism and for the transformation
of connections via bundle morphisms.

Suppose that the equivariant tautological bundle $(A,\lambda)$ 
over $X$ has an invariant connection $\Gamma_0$. We obtain 
a connection $(\psi_{y_0})_*(\Gamma_0)$ on $P$ which will be
$\beta$-invariant because, for any $a\in A$ we have
$$
(\phi_a)_*((\psi_{y_0})_*(\Gamma_0))\,=\,
(\psi_{y_0})_*((l_a)_*(\Gamma_0))\,=\,(\psi_{y_0})_*(\Gamma_0)\, .
$$
Moreover, the invariant connection $(\psi_{y_0})_*(\Gamma_0)$ 
on $P$ does not depend on the choice of $y_0\in P_{x_0}$, 
because $\psi_{y_0 k}\,=\,R_k\circ \psi_{y_0}$, so the horizontal 
spaces of the connections $(\psi_{y_0})_*(\Gamma_0)$ and 
$(\psi_{y_0k})_*(\Gamma_0)$ coincide. 

Using Lemma \ref{invtensorial} and Corollary \ref{finiteaffine} 
we obtain result, which gives an alternative proof of Theorem 11.7 \cite{KN}:

\begin{pr} \label{can-id} Suppose that $\alpha$ is transitive. Fix $x_0\,\in\, X$ 
with stabilizer $H_0$, and let $\Gamma_0$ be an
invariant connection on the tautological 
equivariant $H_0$-bundle $(A,\lambda)$. Then 
\begin{enumerate}
\item Any $(K,\alpha)$-bundle $(P,\beta)$ over $X$ has a canonical invariant connection.
\item After choosing a point $y_0\,\in\, P_{x_0}$, there is a
canonical identification between the space ${\cal A}(P)^\beta$ of 
$\beta$-invariant connections on $P$ and the space
$$S_{y_0}\,:=\,\{\mu\in \Hom(\ag/\hg_0,\kg)|\ \mu \circ \ad_h \,=\,
\ad_{\chi_{y_0}(h)}\circ \mu \ \forall\, h\in H_0\}\,\subset\, \Hom(\ag/\hg_0,\kg)\, .$$
\end{enumerate}
\end{pr}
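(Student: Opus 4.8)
The plan is to assemble Proposition \ref{can-id} from the pieces already in place, using the bundle morphism $\psi_{y_0}\colon A\longrightarrow P$ as the bridge between the tautological bundle and an arbitrary $(K,\alpha)$-bundle. For part (1), the construction is already carried out in the paragraphs preceding the statement: given the invariant connection $\Gamma_0$ on $(A,\lambda)$, push it forward along $\psi_{y_0}$ to get $(\psi_{y_0})_*(\Gamma_0)\in{\cal A}(P)$; the two displayed computations above show this is $\beta$-invariant and independent of the choice of $y_0\in P_{x_0}$ (since $\psi_{y_0k}=R_k\circ\psi_{y_0}$ leaves the horizontal distribution unchanged). So I would simply recall these facts and declare this connection to be the canonical invariant connection $\Gamma_0^P$ on $(P,\beta)$.

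For part (2), I would combine Corollary \ref{finiteaffine} with part (1). Corollary \ref{finiteaffine} already tells us that ${\cal A}(P)^\beta$ is an affine space over $S_{y_0}=\{\mu\in\Hom(\ag/\hg_0,\kg)\,|\,\mu\circ\ad_h=\ad_{\chi_{y_0}(h)}\circ\mu\ \forall h\in H_0\}$ (using Remark \ref{interpret} to identify $S_{y_0}\subset\Hom(\ag,\kg)$ with a subspace of $\Hom(\ag/\hg_0,\kg)$); the content of part (2) is that this torsor has a canonical base point, namely $\Gamma_0^P$ from part (1). Thus the required identification is
\[
S_{y_0}\,\longrightarrow\,{\cal A}(P)^\beta,\qquad \mu\,\longmapsto\,\Gamma_0^P+s_{y_0}^{-1}(\mu)\,,
\]
where $s_{y_0}$ is the isomorphism of Lemma \ref{invtensorial}. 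I would check that this is canonical in the stated sense: it depends on $y_0\in P_{x_0}$, but the base point $\Gamma_0^P$ does not (as noted), while $s_{y_0}$ transforms by $\ad_{k^{-1}}$ under $y_0\mapsto y_0k$, exactly matching the transformation $S_{y_0k}=\ad_{k^{-1}}(S_{y_0})$ — so the whole identification is natural once $y_0$ is fixed, and the family is equivariant in $y_0$.

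Finally, I would remark on the relation to Theorem 11.7 of \cite{KN}: the classical Wang correspondence identifies invariant connections on a fibre-transitive bundle with certain linear maps $\ag\longrightarrow\kg$, but phrased relative to a chosen invariant connection; here the choice of $\Gamma_0$ on the tautological bundle $(A,\lambda)$ provides that reference point simultaneously for \emph{all} $(K,\alpha)$-bundles, which is what makes the identification in (2) canonical and uniform across bundles. The only mild subtlety — and the step I expect to require the most care — is verifying that the base point produced by pushing $\Gamma_0$ forward along $\psi_{y_0}$ is genuinely independent of $y_0$ and compatible with the $\ad_{k^{-1}}$-twist of $S_{y_0}$, so that "canonical" is justified; but this is precisely the computation $\psi_{y_0k}=R_k\circ\psi_{y_0}$ already recorded above, together with the transformation rule in Lemma \ref{invtensorial}, so no new ideas are needed.
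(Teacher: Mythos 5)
Your proposal is correct and follows essentially the same route as the paper: the paper derives Proposition \ref{can-id} directly from the preceding construction of $(\psi_{y_0})_*(\Gamma_0)$ (its $\beta$-invariance and independence of $y_0$) together with Lemma \ref{invtensorial} and Corollary \ref{finiteaffine}, exactly as you do. Your additional check that the identification is compatible with the $\ad_{k^{-1}}$-twist under $y_0\mapsto y_0k$ is a correct and slightly more explicit justification of the word ``canonical'' than the paper spells out.
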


Now we can prove the main result of this section. 

\begin{dt}\label{modulidef}
{\rm We introduce the moduli space ${\cal M}(A,H_0,K)$ by}
$$
{\cal M}(A,H_0,K)\,:=\,
$$
$$
\qmod{\big\{(\chi,\mu)\,\in\,\Hom(H_{0},K)\times 
\Hom(\ag/\hg_0,\kg)\, \big|\ \mu \circ \ad_h
\,=\,\ad_{\chi(h)}\circ \mu \ \forall\, h\in H_0\big\}}{K}\ ,
$$
{\rm where $K$ acts by conjugation on the set of pairs $(\chi,\mu)$.}
\end{dt}

\begin{thry}\label{main}
Suppose that $\alpha$ is transitive. Fix $x_0\,\in\, X$ with 
stabilizer $H_0$, and suppose that the tautological equivariant 
$H_0$-bundle $(A,\lambda)$ over $X$ has a $\lambda$-invariant 
connection $\Gamma_0$. Let $K$ be a Lie group. Let
$\Phi_{\alpha,K}$ be the set of equivalence classes of 
$\alpha$-invariant connections, i.e., the set of triples 
$(P,\beta,\Gamma)$, where $(P,\beta)$ is a $(K,\alpha)$-bundle 
and $\Gamma$ a $\beta$-invariant connection on $P$, up to 
equivalence. There exists a natural bijection
$$C_{x_0}\,:\,\Phi_{\alpha,K}\stackrel{\simeq}{\longrightarrow}\, {\cal M}(A,H_0,K)\, .$$
\end{thry}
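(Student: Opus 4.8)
The plan is to construct the bijection $C_{x_0}$ explicitly and then verify it is well defined and bijective, using the tautological equivariant bundle $(A,\lambda)$ as the central device. First I would fix the invariant connection $\Gamma_0$ on $(A,\lambda)$ once and for all. Given an $\alpha$-invariant connection $(P,\beta,\Gamma)$, choose $y_0\in P_{x_0}$; this produces the homomorphism $\chi_{y_0}\colon H_0\longrightarrow K$ of \eqref{e2} and, via the $\chi_{y_0}$-morphism $\psi_{y_0}\colon A\longrightarrow P$, the canonical invariant connection $(\psi_{y_0})_*(\Gamma_0)$ on $P$ furnished by Proposition \ref{can-id}. By Corollary \ref{finiteaffine} and Proposition \ref{can-id}(2), the difference $\Gamma-(\psi_{y_0})_*(\Gamma_0)$ corresponds to a unique element $\mu\in S_{y_0}\subset\Hom(\ag/\hg_0,\kg)$. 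I would then set $C_{x_0}([P,\beta,\Gamma])\,:=\,[(\chi_{y_0},\mu)]\in{\cal M}(A,H_0,K)$.

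The second step is to check this is well defined. There are two sources of ambiguity: the choice of $y_0\in P_{x_0}$ and the choice of an isomorphism $\phi\colon(P,\beta)\longrightarrow(P',\beta')$ realizing an equivalence. Replacing $y_0$ by $y_0k$ changes $\chi_{y_0}$ to $\iota_{k^{-1}}\circ\chi_{y_0}$ (already recorded in the excerpt) and, by the transformation law $S_{y_0k}=\ad_{k^{-1}}(S_{y_0})$, $s_{y_0k}=\ad_{k^{-1}}\circ s_{y_0}$ of Lemma \ref{invtensorial}, changes $\mu$ to $\ad_{k^{-1}}\circ\mu$ — here one uses that the canonical connection $(\psi_{y_0})_*(\Gamma_0)$ is independent of $y_0$, so only the identification of the affine space with $S_{y_0}$ twists. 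Thus the pair $(\chi_{y_0},\mu)$ changes exactly by the conjugation $K$-action, giving a well-defined class in ${\cal M}(A,H_0,K)$. For an isomorphism $\phi$ with $\phi^*(\Gamma')=\Gamma$, one transports $y_0$ to $y_0':=\phi(y_0)\in P'_{x_0}$; since $\phi$ commutes with the $A$-actions, $\chi_{y'_0}=\chi_{y_0}$, and $\phi\circ\psi_{y_0}=\psi_{y'_0}$, so $\phi$ carries the canonical connection to the canonical connection and the difference form to the difference form, whence $\mu'=\mu$. This shows $C_{x_0}$ descends to $\Phi_{\alpha,K}$.

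For surjectivity, given a pair $(\chi,\mu)$ in the defining set, I would build $(P,\beta)$ as the bundle associated to the tautological bundle: $P\,:=\,A\times_{\chi}K\,=\,(A\times K)/H_0$, with $H_0$ acting by $h\cdot(a,k)=(ah^{-1},\chi(h)k)$, right $K$-action on the second factor, and $\beta$ induced by left translation on $A$; this is a $(K,\alpha)$-bundle with a distinguished point $y_0=[e,e]$ satisfying $\chi_{y_0}=\chi$, and $\mu\in S_{y_0}$ by construction. Take $\Gamma:=(\psi_{y_0})_*(\Gamma_0)+s_{y_0}^{-1}(\mu)$. Then $C_{x_0}([P,\beta,\Gamma])=[(\chi,\mu)]$. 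For injectivity, suppose $C_{x_0}$ sends $(P,\beta,\Gamma)$ and $(P',\beta',\Gamma')$ to the same class; after modifying $y_0'$ by a suitable $k\in K$ we may assume $\chi_{y_0}=\chi_{y_0'}$ and $\mu=\mu'$. Then the standard dictionary between $\chi$-morphisms and equivariant bundles gives an isomorphism $\phi\colon(P,\beta)\longrightarrow(P',\beta')$ with $\phi(y_0)=y_0'$, hence $\phi\circ\psi_{y_0}=\psi_{y_0'}$, so $\phi$ matches the canonical connections; since it also matches the difference forms (as $\mu=\mu'$), $\phi^*(\Gamma')=\Gamma$, i.e., the two triples are equivalent.

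The main obstacle I anticipate is the bookkeeping in the well-definedness step — precisely, disentangling which objects depend on $y_0$ and which do not. The key point that makes it work is the observation already in the excerpt that $(\psi_{y_0})_*(\Gamma_0)$ is independent of $y_0\in P_{x_0}$ (the horizontal spaces of $(\psi_{y_0})_*(\Gamma_0)$ and $(\psi_{y_0k})_*(\Gamma_0)$ coincide), so the only $y_0$-dependence resides in the affine identification $s_{y_0}$, whose transformation law under $y_0\mapsto y_0k$ is exactly the conjugation action of $K$ appearing in Definition \ref{modulidef}. A secondary technical point is verifying that the associated-bundle construction $A\times_\chi K$ together with the left-translation action indeed recovers an arbitrary $(K,\alpha)$-bundle up to isomorphism, i.e., that $\psi_{y_0}$ identifies $P$ with $A\times_{\chi_{y_0}}K$ equivariantly; this is essentially the remark preceding Proposition \ref{can-id} that every $(K,\alpha)$-bundle is associated with the tautological one, and it should be dispatched by a direct check that $(a,k)\mapsto\beta(a,y_0)k$ descends to the required isomorphism.
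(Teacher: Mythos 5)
Your proposal is correct and follows essentially the same route as the paper: the map $(P,\beta,\Gamma)\longmapsto(\chi_{y_0},\mu^{y_0})$ with the difference $\Gamma-(\psi_{y_0})_*(\Gamma_0)$ measured in $S_{y_0}$, the well-definedness via the $y_0$-independence of the canonical connection together with the conjugation transformation law, and the inverse built from the associated bundle $A\times_\chi K$ with $\Gamma=\psi_*(\Gamma_0)+\eta_\mu$. Your explicit injectivity check is slightly more detailed than what the paper writes out, but the argument is the same.
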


\begin{proof} Consider a triple $(P,\beta,\Gamma)$, where 
$(P,\beta)$ is a $(K,\alpha)$-bundle and $\Gamma$ a 
$\beta$-invariant connection on $P$. Choosing a point $y_0\,\in\, 
P_{x_0}$ and using the construction explained above we obtain a 
group morphism $\chi_{y_0}\,:\,H_0\,\longrightarrow\, K$, an equivariant
bundle map $\psi_{y_0}\,:\,(A,\lambda)\,\longrightarrow\, (P,\beta)$ over
$X$, and a $\beta$-invariant connection 
$(\psi_{y_0})_*(\Gamma_0)$. By Corollary \ref{finiteaffine}, the 
difference $\Gamma-(\psi_{y_0})_*(\Gamma_0)$ can be regarded as 
an element $\mu^{y_0}\,\in\, S_{y_0}$. We define the map $C_{x_0}$ by
$$(P,\beta,\Gamma)\,\longmapsto\, (\chi_{y_0},\mu^{y_0})\, .
$$

Using the equivariance properties proved above, we see that 
$(\chi_{y_0},\mu^{y_0})$ is independent of $y_0$ up to 
conjugation. The proof uses the fact that the reference 
connection $(\psi_{y_0})_*(\Gamma_0)$ is independent of $y_0$ 
(this was shown above), so $\Gamma-(\psi_{y_0})_*(\Gamma_0)$ is a
well defined $\beta$-invariant tensorial form of type
$\ad$. Moreover, two equivalent triples $(P,\beta,\Gamma)$,
$(P',\beta',\Gamma')$ define obviously the same element in the quotient
$$\qmod{\big\{(\chi,\mu)\,\in\,\Hom(H_{0},K)\times
\Hom(\ag/\hg_0,\kg)\big|\
\mu \circ \ad_h \,=\,\ad_{\chi(h)}\circ \mu \ \forall\, h\in H_0\big\}}{K} \ .
$$

In order to prove that $C_{x_0}$ is bijective, we will construct 
an inverse map. For a pair $(\chi,\mu)\,\in\,\Hom(H_{0},K)\times\Hom( 
\ag/\hg_0,\kg)$ with $\mu \circ \ad_h \,=\,\ad_{\chi(h)}\circ 
\mu$, we define a triple $(P,\beta,\Gamma)$ by 
$$P\,:=\,A\times_\chi K\ ,\ \beta(a',[a,k])\,:=\,[a'a,k]\ ,\ \Gamma=\psi_*(\Gamma_0)+\eta_\mu\ ,
$$
where $\psi\,:\,A\,\longrightarrow\, P$ is the obvious map defined by 
$\psi(a)\,:=\,[a,e]$, and $\eta_\mu$ denotes the $\beta$-invariant tensorial 1-form of type $\ad$ associated with $\mu$. This map is a morphism of principal bundles of 
type $\chi$ over $X$ (see \cite{KN}), and it is equivariant with 
respect to the left $A$-actions.
\end{proof}

Taking into account Remark \ref{interpret} we get

\begin{re} {\rm The condition on $\mu$ in the definition of the 
moduli space ${\cal M}(A,H_0,K)$ has a natural interpretation: 
$\mu$ is a $H_0$-equivariant linear map 
$\ag/\hg_0\,\longrightarrow \,\kg$ with $\kg$ regarded as a 
$H_0$-space via $\ad\circ\chi$.}
\end{re}

\begin{re} {\rm Suppose that $\alpha$ is transitive. Fix $x_0\in 
X$ with stabilizer $H_0$. Then the map
$$\Gamma\,\longmapsto\,\Gamma_e\,\subset\,\ag$$
defines a bijection between the space of $\lambda$-invariant 
connections on the $H_0$-bundle $A\,\longrightarrow\, X$ and the space of 
$\ad_{H_0}$-invariant complements of $\kg_0$ in $\ag$. In 
particular the tautological $(H_0,\alpha)$-bundle $(A,\lambda)$ over
$X$ admits an invariant connection if the pair $(A,H_0)$ satisfies the condition
\begin{equation}\label{C}
\hbox{ The subalgebra $\hg_0\,\subset\,\ag$ admits an 
$\ad_{H_0}$-invariant complement in $\ag$.}
\end{equation}
If $\hg_0$ has an $\ad_{H_0}$-invariant complement $\sg$ in 
$\ag$, then the tautological $(H_0,\alpha)$-bundle $(A,\lambda)$ 
over $X$ has a unique invariant connection $\Gamma_0$ whose 
horizontal space in $e\in A$ is $\sg$.  This follows directly from 
Proposition XVIII in \cite[p. 285]{GHV}.  The connection 
$\Gamma_0$ corresponding to a complement $\sg$ is flat if and only if $\sg$ is a Lie subalgebra.}
\end{re}

If condition  \eqref{C} is satisfied, Proposition \ref{can-id} applies, and we obtain an alternative proof of  Theorem 11.7 \cite{KN}, which classifies invariant connections with respect to a fibre-transitive action assuming  this condition.

Note that the condition \eqref{C} plays an important role in the 
theory of homogeneous spaces. If it is satisfied, the 
homogeneous space $A/H_0$ (or more precisely the pair $(A,H_0)$) 
is called {\it reductive} \cite[p. 30]{Ya}. The following 
remark shows that this condition is always satisfied when $H_0$ 
is compact, and that it is compatible with covers ${\widetilde 
A}\,\longrightarrow \,A$:

\begin{lm}\label{functoriality}
{\rm Suppose again that $\alpha$ is 
transitive. Fix $x_0\,\in\, X$ with stabilizer $H_0$.
\begin{enumerate}
\item If $H_0$ is compact, then the pair $(A,H_0)$ satisfies the 
condition \eqref{C}.
\item If $(A,H_0)$ satisfies the condition \eqref{C}, and 
$c\,:\,{\widetilde A}\,\longrightarrow \,A$ is a cover of $A$, then
the pair $({\widetilde A}\, ,c^{-1}(H_0))$ 
also satisfies the condition \eqref{C}.
\end{enumerate}}
\end{lm}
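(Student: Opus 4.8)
The plan is to prove the two assertions separately; each is a standard fact about a pair consisting of a Lie group and a closed subgroup, so the real work is just in organizing the argument. Recall first that $\hg_0$, being the Lie algebra of the subgroup $H_0\subset A$, is $\ad_{H_0}$-invariant, since $H_0$ fixes itself setwise under conjugation.

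\emph{Part (1).} When $H_0$ is compact I would start from an arbitrary inner product $\langle\cdot\,,\cdot\rangle_0$ on the finite-dimensional real vector space $\ag$ and average it over $H_0$ against normalized Haar measure $dh$, setting $\langle\xi,\zeta\rangle:=\int_{H_0}\langle\ad_h\xi,\ad_h\zeta\rangle_0\,dh$. This produces an $\ad_{H_0}$-invariant inner product on $\ag$. Its restriction to $\hg_0$ is again positive definite, so $\ag=\hg_0\oplus\sg$ with $\sg:=\hg_0^\perp$, and $\sg$ is $\ad_{H_0}$-invariant because it is the orthogonal complement of an $\ad_{H_0}$-invariant subspace with respect to an $\ad_{H_0}$-invariant inner product. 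Hence $(A,H_0)$ satisfies \eqref{C}.

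\emph{Part (2).} Write $\widetilde H_0:=c^{-1}(H_0)$; since $H_0$ is closed in $A$ and $c$ is continuous, $\widetilde H_0$ is a closed subgroup of $\widetilde A$, hence an embedded Lie subgroup, with some Lie algebra $\widetilde\hg_0$. As $c$ is a covering homomorphism, its differential $dc:\widetilde\ag\to\ag$ is an isomorphism of Lie algebras and satisfies $c\circ\exp=\exp\circ\,dc$. Consequently, for $\xi\in\widetilde\ag$, one has $\xi\in\widetilde\hg_0$ iff $\exp(t\xi)\in\widetilde H_0$ for all $t\in\R$, iff $\exp(t\,dc(\xi))\in H_0$ for all $t$, iff $dc(\xi)\in\hg_0$; thus $dc$ maps $\widetilde\hg_0$ isomorphically onto $\hg_0$. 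Moreover, $c$ being a homomorphism, $dc$ intertwines the adjoint representations: $dc\circ\ad_{\tilde h}=\ad_{c(\tilde h)}\circ dc$ for every $\tilde h\in\widetilde A$. Now assume $(A,H_0)$ satisfies \eqref{C} and pick an $\ad_{H_0}$-invariant complement $\sg$ of $\hg_0$ in $\ag$; set $\widetilde\sg:=(dc)^{-1}(\sg)$. Applying $(dc)^{-1}$ to $\ag=\hg_0\oplus\sg$ gives $\widetilde\ag=\widetilde\hg_0\oplus\widetilde\sg$, and for $\tilde h\in\widetilde H_0$ we have $dc(\ad_{\tilde h}\widetilde\sg)=\ad_{c(\tilde h)}\sg=\sg$ because $c(\tilde h)\in H_0$, whence $\ad_{\tilde h}\widetilde\sg=\widetilde\sg$. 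Therefore $(\widetilde A,\widetilde H_0)$ satisfies \eqref{C}.

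The one delicate point, and the step I would be most careful to get right, is the identification $\widetilde\hg_0=(dc)^{-1}(\hg_0)$: this must be argued through the exponential characterization of the Lie algebra of a closed subgroup, as above, rather than via connectedness, since $c^{-1}(H_0)$ need not be connected even when $H_0$ is. Everything else is routine linear algebra and bookkeeping.
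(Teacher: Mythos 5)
Your proof is correct and follows essentially the same route as the paper: for part (2) you pull back an $\ad_{H_0}$-invariant complement through the Lie algebra isomorphism $dc$, exactly as the paper does with $c_*^{-1}(\sg)$ (using that $\ad_{\tilde a}=\ad_{c(\tilde a)}$ under the identification $\widetilde\ag\cong\ag$), and your extra care in verifying $\widetilde\hg_0=(dc)^{-1}(\hg_0)$ via the exponential characterization is a routine detail the paper leaves implicit. For part (1) the paper simply cites Corollary III, p.~286 of Greub--Halperin--Vanstone, and your Haar-measure averaging argument is precisely the standard proof of that cited fact.
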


\begin{proof}
The first statement is Corollary III p. 286 in \cite{GHV}.

The subgroup $c^{-1}(H_0)$ will be denoted by ${\widetilde H}_0$.
For any ${\widetilde a}\,\in\, \widetilde A$ one has $\ad_{\widetilde 
a}\,=\,\ad_{c({\widetilde a})}$ in 
$\GL(\ag)\,=\,\GL({\widetilde\ag})$. So for any $\ad_{H_0}$-invariant 
complement $\sg$ of $\hg_0$ in $\ag$, the pull-back 
$c_*^{-1}(\sg)$ is an $\ad_{{\widetilde H}_0}$-invariant 
complement of ${\widetilde \hg}_0$ in $\widetilde\ag$. Hence the 
second statement follows.
\end{proof}

\begin{co} \label{ForCov}
Suppose that the action $\alpha$ is transitive. Fix $x_0\in X$, 
and suppose that the pair $(A,H_0)$ satisfies the condition 
\eqref{C}. Then Theorem \ref{main} applies to $\alpha$ and to the 
induced action $\widetilde\alpha\,:\,{\widetilde A}\times 
X\,\longrightarrow\, X$ associated 
with any cover $c\,:\,\widetilde A\,\longrightarrow \,A$. 
Therefore, for any such cover $c$, we get an identification
$$\Phi_{\widetilde\alpha,K} 
\stackrel{\simeq}{\longrightarrow}{\cal 
M}(\widetilde A,{\widetilde H}_0,K)\ ,
$$
where ${\widetilde H}_0\, :=\, c^{-1}(H_0)$.
\end{co}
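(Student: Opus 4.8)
The plan is to deduce Corollary~\ref{ForCov} directly from Theorem~\ref{main} by checking that the hypotheses of that theorem hold both for $\alpha$ and for the covering action $\widetilde\alpha$. First I would note that $\widetilde\alpha$ is again transitive on $X$: since $c:\widetilde A\longrightarrow A$ is surjective and $\alpha$ is transitive, the composition $\widetilde A\times X\xrightarrow{c\times\id} A\times X\xrightarrow{\alpha} X$ is transitive as well. Moreover the stabilizer of the chosen point $x_0$ under $\widetilde\alpha$ is exactly $c^{-1}(H_0)=\widetilde H_0$, because $\widetilde a\cdot x_0=x_0$ if and only if $c(\widetilde a)\cdot x_0=x_0$.

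Next I would verify the remaining hypothesis of Theorem~\ref{main}, namely that the tautological equivariant $\widetilde H_0$-bundle $(\widetilde A,\widetilde\lambda)$ over $X$ admits a $\widetilde\lambda$-invariant connection. By the remark preceding Lemma~\ref{functoriality}, this is equivalent to the condition \eqref{C} for the pair $(\widetilde A,\widetilde H_0)$. We are assuming \eqref{C} holds for $(A,H_0)$, and Lemma~\ref{functoriality}(2) says precisely that \eqref{C} is inherited by $(\widetilde A, c^{-1}(H_0))=(\widetilde A,\widetilde H_0)$ under any cover. Hence such an invariant connection $\widetilde\Gamma_0$ exists, and the hypotheses of Theorem~\ref{main} are satisfied for $\widetilde\alpha$.

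Applying Theorem~\ref{main} to the transitive action $\widetilde\alpha$, the point $x_0$ with stabilizer $\widetilde H_0$, the invariant connection $\widetilde\Gamma_0$, and the Lie group $K$, we obtain a natural bijection
$$C_{x_0}\,:\,\Phi_{\widetilde\alpha,K}\stackrel{\simeq}{\longrightarrow}\,{\cal M}(\widetilde A,\widetilde H_0,K)\,,$$
which is exactly the asserted identification. The statement that Theorem~\ref{main} also applies to $\alpha$ itself is immediate, since \eqref{C} for $(A,H_0)$ is assumed and transitivity of $\alpha$ is assumed.

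I do not expect any serious obstacle here: the corollary is essentially a bookkeeping consequence of combining Theorem~\ref{main} with the functoriality Lemma~\ref{functoriality}(2). The only point requiring a word of care is the identification of the $\widetilde\alpha$-stabilizer of $x_0$ with $c^{-1}(H_0)$ and the compatibility of the adjoint actions — but the latter is already recorded in the proof of Lemma~\ref{functoriality}, where it is observed that $\ad_{\widetilde a}=\ad_{c(\widetilde a)}$ on $\widetilde\ag=\ag$, so nothing new is needed.
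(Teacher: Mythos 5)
Your argument is correct and follows exactly the route the paper intends: transitivity and the identification of the $\widetilde\alpha$-stabilizer with $c^{-1}(H_0)$ are immediate, Lemma~\ref{functoriality}(2) transfers condition~\eqref{C} to $(\widetilde A,\widetilde H_0)$, the remark preceding that lemma converts~\eqref{C} into the existence of an invariant connection on the tautological bundle, and Theorem~\ref{main} then applies. Nothing is missing.
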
 

In the particular case when ${\widetilde H}_0$ is simply 
connected, the right hand quotient in Corollary \ref{ForCov}
can be described using Lie algebra morphisms $\chi$ instead of Lie group morphisms:

\begin{re}\label{WithAlg}
{\rm If ${\widetilde H_0}\,:=\,c^{-1}(H_0)$ is simply connected, then
$${\cal M}(\widetilde A,{\widetilde H}_0,K)\,=$$
$$\qmod{\big\{(\chi,\mu)\,\in\,\Hom_{\rm Lie Alg}(\hg_0,\kg)\times 
\Hom(\ag/\hg_0,\kg)\,\big|\ \mu \circ \ad_h \,=\,\ad_{\chi(h)}\circ 
\mu \ \forall\, h\in \hg_0\big\}}{K}\ ,
$$
which is the quotient of a $K$-invariant real algebraic affine 
subvariety of the vector space $\Hom(\hg_0,\kg)\times 
\Hom(\ag/\hg_0,\kg)$ on which $K$ acts by linear automorphisms.}
\end{re}

\section{Comparing isomorphism classes of invariant connections 
with invariant gauge classes of connections}

Let $p\,:\,P\,\longrightarrow\, X$ be a principal $K$-bundle on $X$. As 
we have seen in Section \ref{intro}, the elements of the 
gauge group ${\cal G}_P$ of $P$ can be interpreted as sections of 
the group bundle $P\times_{\Ad} K$. The fiber $K_x$ of 
$P\times_\Ad K$ over a point $x\in X$ is identified with the 
group of automorphisms $P_x\,\longrightarrow\, P_x$ which commute 
with the right action of $K$ on $P_x$ (this follows from the 
fact that the group of diffeomorphisms of $K$ commuting with the
right translation action of $K$ on itself is precisely the
left translations). Therefore, $K_x\,\simeq\, K$ 
(unique up to an inner automorphism) for every $x\,\in\, X$, and a 
point $y\,\in\, P_x$
defines an isomorphism $i_y\,:\,K_x\,\longrightarrow \,K$ which is given 
by the formula $y(i_y(g))\,=\,g(y)$ for any $g\,\in \,K_x$.

Since $A$ is connected  the isomorphism type of $P$ is 
$\alpha$-invariant;  let $\Gamma$ be a gauge 
$\alpha$-invariant connection on $P$ (see Definition \ref{inv}). 
We denote by $U$ the stabilizer of $\Gamma$ in the gauge group 
${\cal G}_P$ of $P$. The elements of $U$ correspond bijectively 
to $\Gamma$-parallel sections of the associated bundle 
$P\times_{\Ad} K$ endowed with the connection induced by 
$\Gamma$. This proves that, for any fixed $x\,\in\, X$, the group $U$ 
can be identified with the closed subgroup of $K_x$ consisting of 
the elements that commute with the holonomy group of the 
connection $\Gamma$ on $P$ (the holonomy group is a subgroup of
$K_x$ obtained by taking parallel translations of $P_x$ along 
loops based at $x$). Note that $U$ does not need to be 
connected.

Following \cite{Bi1} we define an extension of $A$ by $U$ by 
$$V:=\{(\phi,a)|\ a\in A,\ \phi:P\longrightarrow P\hbox{ is an $f_a$-covering 
bundle isom. with } \phi^*(\Gamma)=\Gamma\}\ .
$$
It is easy to see that $V$ has a Lie group structure such that 
the natural monomorphism $j\,:\,u\,\longmapsto\, (u,e)$ identifies the 
stabilizer $U$ with a closed subgroup of $V$, and such that the 
natural projection $\pi\,:\,V\,\longrightarrow\, A$ becomes a Lie group 
epimorphism. Therefore we obtain a Lie group exact sequence
\begin{equation}\label{ExSeq}
 1\,\longrightarrow\, U\,\stackrel{j}{\longrightarrow}\, V\,
\stackrel{\pi}{\longrightarrow}\, A\,\longrightarrow\, 1\ .
\end{equation}

\begin{pr} \label{lift}
Suppose that the Lie algebra $\ag$ is semisimple and $A$ is 
simply connected. Then the following statements hold:
\begin{enumerate}
\item There exists a Lie group homomorphism $s\,:\,A\,\longrightarrow 
\,V$ such that $\pi\circ s\,=\,\id_A$.

\item If moreover $K$ is compact and all simple summands of 
$\ag$ are non-compact\footnote{By compact Lie algebra we mean a Lie algebra 
$\g$ which is the Lie algebra of a compact Lie group, or  
equivalently, a Lie algebra which admits an inner product which 
is $\ad$-invariant, in the sense that the endomorphisms $\ad(X)$, 
$X\in\g$ are skew-symmetric (see \cite[p. 194, Theorem 6.6]{HM}).}, then $s$ is unique.
\end{enumerate}
\end{pr}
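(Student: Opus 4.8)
The plan is to work at the level of Lie algebras first and then integrate. Differentiating the exact sequence \eqref{ExSeq} we obtain a short exact sequence of Lie algebras $0\to\ug\to\vg\to\ag\to 0$, where $\ug$ is the Lie algebra of the stabilizer $U$. For part (1), the key point is that $\ag$ is semisimple, so by Whitehead's lemma the second Lie algebra cohomology $H^2(\ag,\ug)$ vanishes for any $\ag$-module structure on $\ug$; hence this extension of Lie algebras splits. More precisely, Levi's theorem applied to $\vg$ produces a semisimple Levi subalgebra, and comparing it with $\ag$ via $\pi_*$ one gets a Lie algebra section $\sigma:\ag\to\vg$ with $\pi_*\circ\sigma=\id_\ag$. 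Now integrate: since $A$ is simply connected, the Lie algebra homomorphism $\sigma:\ag\to\vg$ integrates to a unique Lie group homomorphism $s:A\to V$ with $s_*=\sigma$, and then $\pi\circ s$ and $\id_A$ are two Lie group homomorphisms $A\to A$ with the same differential, so they coincide because $A$ is connected. This gives the desired splitting $s$.

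For part (2), uniqueness of $s$, it suffices to prove uniqueness of the infinitesimal section $\sigma:\ag\to\vg$, since $A$ is simply connected and connected. Suppose $\sigma_1,\sigma_2$ are two Lie algebra sections; their difference $\delta:=\sigma_1-\sigma_2:\ag\to\vg$ takes values in $\ug=\ker\pi_*$, and the section condition forces $\delta$ to be a $1$-cocycle, i.e.\ a crossed homomorphism $\ag\to\ug$ for the induced $\ag$-action on $\ug$; more relevantly, when one decomposes $\ag=\bigoplus_i\ag_i$ into simple ideals, the restriction of either $\sigma_j$ to $\ag_i$ is a Lie algebra morphism into $\vg$, and the two restrictions differ by a map into $\ug$ that must intertwine the brackets. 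The mechanism for rigidity is the following: $\ug$ is the Lie algebra of $U\subset K_x\cong K$, and since $K$ is compact, $\ug$ is a compact Lie algebra, so it carries an $\ad(\ug)$-invariant inner product and has no nonzero ideal isomorphic to a quotient of a non-compact simple Lie algebra. One then argues that any Lie algebra homomorphism from a simple non-compact $\ag_i$ into $\vg$ is determined by its projection $\pi_*$, because the image of $\ag_i$ under $\sigma_j$ is a semisimple subalgebra whose only possible "compact directions'' — which is where the ambiguity living in $\ug$ would sit — are forced to be trivial by non-compactness of $\ag_i$ together with the fact that a simple non-compact Lie algebra admits no nontrivial homomorphism to a compact Lie algebra (a homomorphic image of $\ag_i$ in the compact algebra $\ug$ would be a compact quotient of $\ag_i$, hence zero since $\ag_i$ is simple non-compact).

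I expect the main obstacle to be part (2): making the compactness argument airtight. The delicate point is to show that the ambiguity $\delta=\sigma_1-\sigma_2$, which a priori is only a linear map $\ag\to\ug$ satisfying a cocycle-type identity, must actually vanish — one has to extract from "each simple summand of $\ag$ is non-compact'' and "$\ug$ is compact'' that there is no room for a nonzero twist. The clean way is: for each simple ideal $\ag_i$, the two maps $\sigma_1|_{\ag_i},\sigma_2|_{\ag_i}:\ag_i\to\vg$ are Lie algebra homomorphisms with equal composite with $\pi_*$; their "difference'' is not itself a homomorphism, so instead one notes that $\mathrm{im}(\sigma_j|_{\ag_i})$ is a homomorphic image of the simple algebra $\ag_i$, hence either $0$ or isomorphic to $\ag_i$, and in the latter case it is a non-compact simple subalgebra of $\vg$ mapping isomorphically to $\ag_i\subset\ag$ under $\pi_*$; since $\ker\pi_*=\ug$ is compact and a non-compact simple algebra has no nonzero homomorphism to a compact one, the subalgebra $\sigma_1|_{\ag_i}-\sigma_2|_{\ag_i}$-type correction must be trivial, forcing $\sigma_1|_{\ag_i}=\sigma_2|_{\ag_i}$. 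Summing over $i$ gives $\sigma_1=\sigma_2$, and integrating gives $s_1=s_2$. I would also double-check that the hypotheses are used exactly where claimed: semisimplicity of $\ag$ for existence (Whitehead/Levi), simple connectedness of $A$ for integration in both parts, and compactness of $K$ together with non-compactness of all simple summands of $\ag$ only for uniqueness.
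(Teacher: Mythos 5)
Your treatment of part (1) is essentially the paper's: semisimplicity of $\ag$ splits the Lie algebra extension $0\to\ug\to\vg\to\ag\to 0$ (the paper cites Chevalley--Eilenberg; your Levi-subalgebra route is the correct way to see it, since Whitehead's second lemma as such only handles abelian kernels and $\ug$ need not be abelian), and simple connectedness of $A$ integrates $\sigma$ to $s$. That part is fine.

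Part (2) has a genuine gap. The linchpin of the uniqueness argument is that the conjugation action of $s(A)$ on $U$ is \emph{trivial}, and you never establish this; without it your concluding step is a non sequitur. If $s_1$ is a second section, then $c(a):=s_1(a)s(a)^{-1}$ lies in $U$, but $c$ is only a crossed homomorphism (twisted $1$-cocycle) for the conjugation action of $A$ on $U$ via $s$ --- not a group homomorphism --- and likewise $\delta=\sigma_1-\sigma_2$ is only a nonabelian cocycle. The fact that a non-compact simple Lie algebra has no nonzero homomorphism into a compact one applies to homomorphisms, not to cocycles: for a nontrivial $\ag$-module structure on an abelian $\ug$, every $v\in\ug$ yields a nonzero coboundary $x\mapsto x\cdot v$, i.e.\ a genuinely different section, even though $\Hom_{\rm LieAlg}(\ag,\ug)=0$. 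Your alternative phrasing --- that $\sigma_j(\ag_i)$ is a non-compact simple subalgebra mapping isomorphically onto $\ag_i$, hence the ``correction'' vanishes --- also does not follow: $\vg$ may contain many distinct subalgebras isomorphic to $\ag_i$, all mapping isomorphically onto $\ag_i$ under $\pi_*$ (a semidirect product with a nontrivial module is again an example). The paper closes this gap at the \emph{group} level: since $U$ is a compact group, $\Aut_0(U)$ is a compact Lie group (Hofmann--Morris), so the homomorphism $A\to\Aut_0(U)$ induced by conjugation via $s$ has vanishing differential (compact target, non-compact simple summands in the source) and is therefore trivial; hence $s(A)$ commutes with $U$, the map $v\mapsto v\,(s\pi(v))^{-1}$ is an honest homomorphism $V\to U$, and composing it with $s_1$ gives a homomorphism $A\to U$ that must be trivial, forcing $s_1=s$. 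Note that this step cannot be carried out purely on Lie algebras as you propose: when $Z(\ug)\neq 0$ the algebra $\mathrm{Der}(\ug)$ contains $\mathfrak{gl}(Z(\ug))$ and is not compact, so compactness of $\ug$ alone does not kill the action; one needs that $A$ acts by automorphisms of the compact group $U$, which are constrained by the lattice of its central torus. Once triviality of the action is in hand, your $\delta$ does become a genuine homomorphism $\ag\to\ug$ and your compactness argument finishes the proof, so the missing idea is precisely this one step.
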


\begin{proof}
Statement (1): Using the terminology used in
\cite[p. 122, Theorem 24.4]{CE}
 we see that when $\ag$ is semisimple, the 
Lie algebra extension
$$
0\,\longrightarrow\, \ug\,\stackrel{j_*}{\longrightarrow}\, \vg
\,\stackrel{\pi_*}{\longrightarrow}\,\ag\,\longrightarrow\, 0 
$$
associated with \eqref{ExSeq} is inessential. Therefore there 
exists a homomorphism of Lie algebras $\sigma\,:\,\ag\,\longrightarrow 
\,\vg$ such that 
$\pi_*\circ\sigma\,=\,\id_\ag$. If $A$ is simply connected then
$\sigma$ is associated with a group homomorphism 
$s\,:\,A\,\longrightarrow \,V$. This homomorphism clearly satisfies
the condition $\pi\circ s\,=\,\id_A$.
\\
\\
Statement(2): Since $K$ is compact, its closed subgroup $U$ is
also compact. As $A$ is connected, the adjoint representation 
$\Ad$ of $V$ defines via $s$ a group homomorphism 
$r\,:\,A\,\longrightarrow\, \Aut_0(U)$ in the connected component 
$\Aut_0(U)$ of the automorphism group $\Aut(U)$ of $U$. Since
$U$ is a compact Lie group it follows that $L\,:=\,\Aut_0(U)$ is a 
compact Lie group (see \cite[p. 264, Theorem 6.66]{HM}). We 
obtain an induced Lie algebra homomorphism 
$r_*\,:\,\ag\,\longrightarrow \,
\lg$, which must vanish, because the Lie algebra $\lg$ is 
compact
and all the simple summands of $\ag$ are non-compact. Therefore 
the homomorphism $r$ is trivial, implying that the elements of 
$A'\,:=\,s(A)$ commute with the elements of $U'\,:=\,j(U)$.

Therefore, the map $v\,\longmapsto\, v(s\pi(v))^{-1}$ is a group 
homomorphism $\theta\,:\,V\,\longrightarrow\, U$ whose kernel is $A'$. 
For another group homomorphism $s_1\,:\,A\,\longrightarrow\, V$ with 
$\pi\circ s_1\,=\,\id_A$, we 
obtain a group homomorphism $\theta\circ s_1\,:\,A\,\longrightarrow\, U$, 
which is also trivial because $U$ is compact and all the 
simple summands of $\ag$ are non-compact. Therefore 
$\im(s_1)\,\subset\, A'$, which shows that 
$s_1(a)\,=\,(\resto{\pi}{A'})^{-1}(a)\,=\,s(a)$ for every $a\,\in\, A$.
\end{proof}

Note that a group homomorphism $s\,:\,A\,\longrightarrow \,V$ with $\pi\circ s\,=\,\id_A$ 
can be regarded as an action $\beta\,:\,A\times P\,\longrightarrow\, P$ 
by bundle isomorphisms leaving $\Gamma$ invariant. Therefore:
 
\begin{co}
Suppose that $\ag$ is semisimple, and denote by 
$\widetilde\alpha\,:\,\widetilde 
A\times X\,\longrightarrow \,X$ the induced action of the universal 
cover $\widetilde A$ of $A$. Then for every gauge 
$\alpha$-invariant connection $\Gamma\,\in\,{\cal A}(P)$, there 
exists an $\widetilde\alpha$-covering action $\beta\,:\,{\widetilde 
A}\times P\,\longrightarrow \,P$ by bundle isomorphisms such that 
$\Gamma$ is $\beta$-invariant. If, moreover, $K$ is compact and 
all simple summands of $\ag$ are non-compact, the action $\beta$ 
is unique.
\end{co}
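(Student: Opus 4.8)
The plan is to deduce the statement from Proposition \ref{lift}, applied to the induced action $\widetilde\alpha$ of the simply connected group $\widetilde A$. Denote by $c\,:\,\widetilde A\,\longrightarrow\, A$ the covering homomorphism, so that the diffeomorphism of $X$ associated with $\widetilde a\in\widetilde A$ under $\widetilde\alpha$ is $f_{c(\widetilde a)}$. Since gauge $\alpha$-invariance of a connection depends only on the image of $A$ in $\Aut(X)$ (see the remark following Definition \ref{inv}), and $\widetilde A$ has the same image in $\Aut(X)$ as $A$, the connection $\Gamma$ is also gauge $\widetilde\alpha$-invariant. Hence the construction preceding \eqref{ExSeq}, carried out for $\widetilde\alpha$ instead of $\alpha$, produces a Lie group extension
\[
1\,\longrightarrow\, U\,\longrightarrow\, \widetilde V\,\stackrel{\widetilde\pi}{\longrightarrow}\, \widetilde A\,\longrightarrow\, 1\ ,
\]
in which $U$ is still the stabilizer of $\Gamma$ in the gauge group ${\cal G}_P$ (a group that depends only on the pair $(P,\Gamma)$, and not on the action), and
\[
\widetilde V\,=\,\{(\phi,\widetilde a)\,|\ \widetilde a\in\widetilde A,\ \phi:P\longrightarrow P\hbox{ is an $f_{c(\widetilde a)}$-covering bundle isom. with }\phi^*(\Gamma)=\Gamma\}\ .
\]

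Next I would invoke Proposition \ref{lift} for this extension. Since $\widetilde\ag=\ag$ is semisimple and $\widetilde A$ is simply connected, part (1) provides a Lie group homomorphism $s\,:\,\widetilde A\,\longrightarrow\,\widetilde V$ with $\widetilde\pi\circ s=\id_{\widetilde A}$. By the remark following Proposition \ref{lift} (now applied to $\widetilde A$ and $\widetilde V$), such an $s$ amounts to an $\widetilde\alpha$-covering action $\beta\,:\,\widetilde A\times P\,\longrightarrow\, P$ by principal $K$-bundle isomorphisms leaving $\Gamma$ invariant: writing $s(\widetilde a)=(\beta_{\widetilde a},\widetilde a)$ and setting $\beta(\widetilde a,z):=\beta_{\widetilde a}(z)$, the homomorphism property of $s$ makes $\beta$ an action, each $\beta_{\widetilde a}$ is by construction an $f_{c(\widetilde a)}$-covering $K$-bundle isomorphism so that $(P,\beta)$ is a $(K,\widetilde\alpha)$-bundle, and $\beta_{\widetilde a}^*(\Gamma)=\Gamma$ is exactly $\beta$-invariance of $\Gamma$. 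This yields the existence assertion.

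For the uniqueness, under the additional hypotheses that $K$ is compact and all simple summands of $\ag$ are non-compact, I would use that the correspondence $s\,\longmapsto\,\beta$ just described is bijective: conversely, an $\widetilde\alpha$-covering action $\beta$ by bundle isomorphisms with $\beta_{\widetilde a}^*(\Gamma)=\Gamma$ for all $\widetilde a$ determines the homomorphism $\widetilde a\,\longmapsto\,(\beta_{\widetilde a},\widetilde a)$, which lands in $\widetilde V$ and splits $\widetilde\pi$, and the two assignments are mutually inverse. Uniqueness of $\beta$ then follows at once from the uniqueness of the splitting $s$ given by Proposition \ref{lift}(2).

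The corollary is essentially a repackaging of Proposition \ref{lift}, so there is no genuine obstacle; the only points that need (routine) checking are that passing from $A$ to $\widetilde A$ reproduces the extension \eqref{ExSeq} with the \emph{same} kernel $U$, and that group-theoretic splittings of $\widetilde\pi$ correspond bijectively to geometric actions on $P$ preserving $\Gamma$. Both are clear once the relevant definitions are unwound.
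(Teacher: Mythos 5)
Your proposal is correct and follows exactly the route the paper intends: the paper's entire ``proof'' is the one-line remark that a splitting $s$ of $\pi$ is the same thing as an action on $P$ by bundle isomorphisms preserving $\Gamma$, applied to Proposition \ref{lift} for the simply connected group $\widetilde A$. You have merely (and correctly) spelled out the routine details the paper leaves implicit, namely that the extension for $\widetilde\alpha$ has the same kernel $U$ and that splittings correspond bijectively to $\Gamma$-preserving $\widetilde\alpha$-covering actions.
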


In the case when we have uniqueness of action preserving 
$\Gamma$, we will write $\beta_\Gamma$ instead of $\beta$. 

Suppose now that we have two gauge $\alpha$-invariant 
connections $\Gamma\,\in\,{\cal A}(P)$ and $\Gamma'\,\in\,{\cal A}(P')$ and 
an $\id_X$-covering bundle isomorphism $\phi\,:\,P\,\longrightarrow\, P'$ such that 
$\phi^*(\Gamma')\,=\,\Gamma$. Using the uniqueness of action in
Proposition \ref{lift} we see that $\phi$ is equivariant with 
respect to the two actions $\beta_\Gamma$, $\beta_{\Gamma'}$. 
In other words, in this case one can assign in a well defined 
way to every $\alpha$-invariant gauge class $[\Gamma]$ of 
$K$-connections an equivalence class $[P,\beta_\Gamma,\Gamma]$ 
of $\widetilde\alpha$-invariant connections.

Therefore, recalling that the set ${\cal F}_{\alpha,K}$ in 
\eqref{e1} depends only on the image of $A$ in $\Aut(X)$, we 
obtain:

\begin{co} If $\ag$ is semisimple then the comparison map
$$\rho_{\tilde \alpha,K}\,:\,\Phi_{{\widetilde\alpha},K}\longrightarrow 
{\cal F}_{{\widetilde\alpha},K}\,=\,{\cal F}_{\alpha,K}\ , \ 
[P,\beta,\Gamma]\,\longmapsto\, [\Gamma]
$$
is surjective. If, moreover, $K$ is compact and all the simple 
summands of $\ag$ are non-compact, this map is bijective.
\end{co}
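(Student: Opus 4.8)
The plan is to deduce the corollary directly from the preceding results, using the comparison map $\rho_{\tilde\alpha,K}$ and the constructions assembled above, without reproving anything from scratch. First I would recall that, by Remark~\ref{rem}, an element of ${\cal F}_{\alpha,K}={\cal F}_{\tilde\alpha,K}$ is represented by a pair $(P,\Gamma)$ with $\Gamma$ gauge $\alpha$-invariant on some $K$-bundle $P$. Since $\ag$ is semisimple, $A$ is connected and the universal cover $\widetilde A$ is simply connected with semisimple Lie algebra, Proposition~\ref{lift}(1) applies to the extension \eqref{ExSeq} built from $(P,\Gamma)$ (after passing to $\widetilde A$), yielding a splitting $s:\widetilde A\longrightarrow V$, hence an $\widetilde\alpha$-covering action $\beta:\widetilde A\times P\longrightarrow P$ by bundle isomorphisms leaving $\Gamma$ invariant. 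Thus $(P,\beta,\Gamma)$ is a triple representing a class in $\Phi_{\tilde\alpha,K}$ with $\rho_{\tilde\alpha,K}[P,\beta,\Gamma]=[\Gamma]$; this gives surjectivity.

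For the second assertion I would assume in addition that $K$ is compact and all simple summands of $\ag$ are non-compact, so that Proposition~\ref{lift}(2) gives uniqueness of the splitting $s$, hence a \emph{canonical} action $\beta_\Gamma$ attached to each gauge $\alpha$-invariant $\Gamma$. Injectivity of $\rho_{\tilde\alpha,K}$ then amounts to the following: if $(P,\beta,\Gamma)$ and $(P',\beta',\Gamma')$ are $\widetilde\alpha$-invariant connections with $[\Gamma]=[\Gamma']$ in ${\cal F}_{\alpha,K}$, then they are equivalent as $\widetilde\alpha$-invariant connections. Unwinding $[\Gamma]=[\Gamma']$: the bundles $P$ and $P'$ are isomorphic (their classes $\Pg$ coincide under the identification ${\cal B}(P)={\cal B}(P')$), and there is an $\id_X$-covering bundle isomorphism $\phi:P\longrightarrow P'$ with $\phi^*(\Gamma')=\Gamma$. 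By the uniqueness of Proposition~\ref{lift}(2), applied exactly as in the paragraph preceding the corollary, $\phi$ automatically intertwines $\beta_\Gamma=\beta$ and $\beta_{\Gamma'}=\beta'$; indeed $\phi$ transports the extension $V$ for $(P,\Gamma)$ isomorphically onto the extension $V'$ for $(P',\Gamma')$ over $\widetilde A$, hence carries the unique splitting to the unique splitting, which is precisely the statement that $\phi$ is $(\beta,\beta')$-equivariant. Therefore $\phi$ is an isomorphism of $(K,\widetilde\alpha)$-bundles with $\phi^*(\Gamma')=\Gamma$, so $[P,\beta,\Gamma]=[P',\beta',\Gamma']$ in $\Phi_{\tilde\alpha,K}$, proving injectivity.

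The only point requiring genuine care is the claim that $\phi$ transports the splitting of $V$ to the splitting of $V'$: one must check that conjugation by $\phi$ does send $V=\{(\psi,a):\psi^*(\Gamma)=\Gamma\}$ onto $V'$ compatibly with the projections to $\widetilde A$, which is immediate from $\phi^*(\Gamma')=\Gamma$ and the fact that $\phi$ covers $\id_X$, together with the observation that an $f_a$-covering automorphism of $P$ corresponds under $\phi$ to an $f_a$-covering automorphism of $P'$. Given this, both splittings are the unique ones by Proposition~\ref{lift}(2), so they correspond under $\phi$, which is the equivariance statement. I expect this bookkeeping with the two extensions to be the main (though routine) obstacle; everything else is a direct assembly of Proposition~\ref{lift} and the discussion already carried out in the text, and the final sentence should simply cite that the same argument used just above the corollary applies verbatim.
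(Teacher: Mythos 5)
Your argument is correct and is essentially identical to the paper's: the paper proves this corollary by exactly the same assembly, namely Proposition~\ref{lift}(1) applied over $\widetilde A$ to produce a lifting action $\beta_\Gamma$ (surjectivity), and the uniqueness statement of Proposition~\ref{lift}(2) to show that any $\id_X$-covering isomorphism $\phi$ with $\phi^*(\Gamma')=\Gamma$ automatically intertwines $\beta_\Gamma$ and $\beta_{\Gamma'}$ (injectivity). Your extra care about conjugating the extension $V$ onto $V'$ is precisely the content of the paragraph preceding the corollary in the text.
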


Using Corollary \ref{ForCov} and Remark \ref{WithAlg} we obtain:

\begin{co} \label{finalcoro}
Suppose that $\ag$ is semisimple, $K$ is compact, all the simple 
summands of $\ag$ are non-compact, the action $\alpha$ is 
transitive, and the pair $(A,H_0)$ satisfies the condition 
\eqref{C} (which holds automatically when $H_0$ is compact). 
 Then the set $\Phi_{{\widetilde\alpha},K}\,\simeq\,{\cal 
F}_{\alpha,K}$ can be identified with ${\cal 
M}(\widetilde A,{\widetilde H}_0,K)$ (see Definition \ref{modulidef}), where
${\widetilde H}_0\, :=\, c^{-1}(H_0)$. If, 
moreover, the pull-back ${\widetilde H}_0$ is 
simply connected, then the set $\Phi_{{\widetilde\alpha},K}$ can 
be identified with the quotient 
$$\qmod{\big\{(\chi,\mu)\in\Hom_{\rm Lie Alg}(\hg_0,\kg)\times 
\Hom(\ag/\hg_0,\kg)\,\big|\ \mu \circ \ad_h =\ad_{\chi(h)}\circ 
\mu \ \forall\, h\in \hg_0\big\}}{K}\ ,
$$
which is the quotient of a $K$-invariant real algebraic affine 
subvariety of the vector space $\Hom(\hg_0,\kg)\times 
\Hom(\ag/\hg_0,\kg)$ on which $K$ acts by linear automorphisms.
\end{co}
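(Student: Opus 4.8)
The plan is to assemble Corollary \ref{finalcoro} purely from results already established, with no new argument needed beyond a careful chase of the identifications. First I would invoke Lemma \ref{functoriality}(1) to note that when $H_0$ is compact the pair $(A,H_0)$ satisfies condition \eqref{C}, so the hypothesis involving \eqref{C} is indeed automatic in that case. Next, since $\ag$ is semisimple, the preceding Corollary (surjectivity/bijectivity of the comparison map) applies: because $K$ is compact and all simple summands of $\ag$ are non-compact, the map $\rho_{\widetilde\alpha,K}\,:\,\Phi_{\widetilde\alpha,K}\,\longrightarrow\,{\cal F}_{\widetilde\alpha,K}={\cal F}_{\alpha,K}$ is a bijection. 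This gives the identification $\Phi_{\widetilde\alpha,K}\,\simeq\,{\cal F}_{\alpha,K}$.

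Then I would apply Corollary \ref{ForCov} to the induced action $\widetilde\alpha$ of the universal cover $c\,:\,\widetilde A\,\longrightarrow\, A$: by Lemma \ref{functoriality}(2), condition \eqref{C} passes to $(\widetilde A,{\widetilde H}_0)$ with ${\widetilde H}_0=c^{-1}(H_0)$, so Theorem \ref{main} applies to $\widetilde\alpha$ and yields the bijection $\Phi_{\widetilde\alpha,K}\,\stackrel{\simeq}{\longrightarrow}\,{\cal M}(\widetilde A,{\widetilde H}_0,K)$. Composing with the bijection from the previous paragraph gives the first assertion: ${\cal F}_{\alpha,K}\,\simeq\,{\cal M}(\widetilde A,{\widetilde H}_0,K)$. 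For the final clause, I would simply note that when ${\widetilde H}_0$ is simply connected, Remark \ref{WithAlg} rewrites ${\cal M}(\widetilde A,{\widetilde H}_0,K)$ as the displayed quotient, in which the Lie-group homomorphisms $\chi$ are replaced by Lie-algebra homomorphisms $\hg_0\to\kg$ (using that $\Hom(H_0,K)\simeq\Hom_{\rm Lie Alg}(\hg_0,\kg)$ when the source group is simply connected) and the condition $\mu\circ\ad_h=\ad_{\chi(h)}\circ\mu$ is imposed for all $h\in\hg_0$. The description of this set as the $K$-quotient of a real algebraic affine subvariety of $\Hom(\hg_0,\kg)\times\Hom(\ag/\hg_0,\kg)$ is immediate, since the defining equations are polynomial (indeed bilinear) in the pair $(\chi,\mu)$ and $K$ acts by conjugation, hence by linear automorphisms on the ambient vector space.

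Since every ingredient is already proved, there is essentially no genuine obstacle; the only point requiring care is bookkeeping of \emph{which} group and subgroup each cited result is being applied to. In particular one must be careful that the bijection $\rho_{\widetilde\alpha,K}$ is stated for the cover action $\widetilde\alpha$ (not $\alpha$ itself), and that Corollary \ref{ForCov} is likewise applied to the pair $(\widetilde A,{\widetilde H}_0)$; matching these up is what makes the composite identification $\Phi_{\widetilde\alpha,K}\simeq{\cal F}_{\alpha,K}\simeq{\cal M}(\widetilde A,{\widetilde H}_0,K)$ coherent. I would therefore write the proof as a short three-sentence chain of citations: apply the comparison-map corollary, then Corollary \ref{ForCov}, then Remark \ref{WithAlg}, invoking Lemma \ref{functoriality} to discharge the hypothesis \eqref{C} and its stability under covers.

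\begin{proof}
By Lemma \ref{functoriality}(1), if $H_0$ is compact then $(A,H_0)$ satisfies condition \eqref{C}, so this hypothesis is automatic in that case. Since $\ag$ is semisimple, $K$ is compact, and all simple summands of $\ag$ are non-compact, the comparison map $\rho_{\widetilde\alpha,K}\,:\,\Phi_{\widetilde\alpha,K}\,\longrightarrow\,{\cal F}_{\widetilde\alpha,K}={\cal F}_{\alpha,K}$ is a bijection by the preceding corollary; thus $\Phi_{\widetilde\alpha,K}\,\simeq\,{\cal F}_{\alpha,K}$. On the other hand, since $(A,H_0)$ satisfies \eqref{C}, so does $(\widetilde A,{\widetilde H}_0)$ by Lemma \ref{functoriality}(2), and Corollary \ref{ForCov} gives an identification $\Phi_{\widetilde\alpha,K}\,\stackrel{\simeq}{\longrightarrow}\,{\cal M}(\widetilde A,{\widetilde H}_0,K)$. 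Composing the two identifications yields ${\cal F}_{\alpha,K}\,\simeq\,{\cal M}(\widetilde A,{\widetilde H}_0,K)$, which is the first assertion.

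Finally, suppose ${\widetilde H}_0$ is simply connected. Then by Remark \ref{WithAlg} the moduli space ${\cal M}(\widetilde A,{\widetilde H}_0,K)$ equals
$$\qmod{\big\{(\chi,\mu)\in\Hom_{\rm Lie Alg}(\hg_0,\kg)\times \Hom(\ag/\hg_0,\kg)\,\big|\ \mu \circ \ad_h =\ad_{\chi(h)}\circ \mu \ \forall\, h\in \hg_0\big\}}{K}\ ,$$
where we used that, for the simply connected group ${\widetilde H}_0$, group homomorphisms ${\widetilde H}_0\,\longrightarrow\, K$ correspond bijectively to Lie algebra homomorphisms $\hg_0\,\longrightarrow\,\kg$. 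The defining relations $\mu \circ \ad_h =\ad_{\chi(h)}\circ \mu$ are polynomial (in fact bilinear) in the pair $(\chi,\mu)$, so they cut out a real algebraic affine subvariety of the vector space $\Hom(\hg_0,\kg)\times\Hom(\ag/\hg_0,\kg)$; this subvariety is invariant under the action of $K$ by conjugation, which is by linear automorphisms of the ambient vector space. This proves the last assertion.
\end{proof}
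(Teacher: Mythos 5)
Your proof is correct and follows essentially the same route as the paper, which derives this corollary directly by combining the bijectivity of the comparison map $\rho_{\widetilde\alpha,K}$ (from the preceding corollary) with Corollary \ref{ForCov} and Remark \ref{WithAlg}, using Lemma \ref{functoriality} to handle condition \eqref{C} and its stability under covers. Your write-up just makes explicit the citation chain the paper leaves implicit.
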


\section{Examples}

\subsection{Invariant connections over the half-plane}

The main 
result in \cite{Bi1} can be recovered as a special case of our general results. The upper half-plane $\H$ can be identified with the homogeneous manifold
$\mathrm{PSL}(2)/H_0$, where $H_0\,=\,\SO(2)/\{\pm1\}$, whose Lie algebra is
$$\hg_0\,=\,\R h_0\ ,\ h_0\,:=\,\left(\begin{matrix} 0&-1\cr 1&0\end{matrix}\right)\ .
$$
 
Since the pull-back ${\widetilde H}_0$ in 
$\widetilde{\mathrm{PSL}}(2)$ is simply connected, we obtain
$${\cal M}(\widetilde{\mathrm{PSL}}(2),{\widetilde H}_0,K)=$$
$$\qmod{\big\{(\chi,\mu)\in\Hom_{\rm Lie Alg}(\hg_0,\kg)\times \Hom(sl(2)/\hg_0,\kg)\big|\ \mu \circ \ad_h =\ad_{\chi(h)}\circ \mu \ \forall\, h\in \hg_0\big\}}{K}\ .
$$

The space $\Hom(sl(2,\R)/\hg_0,\kg)$ can be identified with 
$\Hom(\mathfrak{h}_0^\bot,\kg)$, where $\mathfrak{h}_0^\bot$ is 
the complement
$$\mathfrak{h}_0^\bot\,:=\,\left\langle \left(\begin{matrix} 1&0\cr 0&-1\end{matrix}\right), \left(\begin{matrix}0&1\cr 1& 0\end{matrix}\right)\right\rangle$$
of $\mathfrak{h}_0$, which is $\ad_{H_0}$-invariant. Putting
$$B\,:=\,\mu\left(\begin{matrix} 1&0\cr 0&-1\end{matrix}\right)\ ,\ C\,:=\,\mu\left(\begin{matrix}0&1\cr 1& 0\end{matrix}\right)\ ,$$
we see that the condition $\mu \circ \ad_h \,=\,\ad_{\chi(h)}\circ \mu$, $\forall\, h
\,\in\, \hg_0$ is equivalent to
$$[\chi(h_0), B]\,=\, 2C,\ [\chi(h_0), C]\,=\,-2 B\ ,$$ 
so, denoting $A\,:=\,C+iB\,\in\, \mathfrak{k}\otimes\C$, this can be written as
\begin{equation}\label{eq}
[\chi(h_0), A]\,=\,2i A\, .
\end{equation}

On the other hand $\chi$ is obviously determined by the vector $\chi_0\,:=\,
\chi(h_0)\,\in\,\kg$. Therefore, the moduli space ${\cal M}(\widetilde{\mathrm{PSL}}(2),{\widetilde H}_0,K)$ can be identified with the quotient 
$${\cal M}_K\,:=\,\qmod{\{(\chi_0,A)\,\in\, \kg\times (\kg\otimes\C)|\ [\chi_0,A]\,=\,2iA\}}{K}\ .
$$

Denoting by $\alpha_\H$, ${\widetilde\alpha}_\H$ the standard 
actions 
of $\mathrm{PSL}(2)$, respectively $\widetilde{\mathrm{PSL}}(2)$ on $\H$, and using Corollary \ref{finalcoro}, we obtain

\begin{co} The set $\Phi_{{\widetilde\alpha}_\H,K}$ of 
equivalence 
classes of ${\widetilde\alpha}_\H$-invariant $K$-connections on 
$\H$ 
can be naturally identified with the moduli space ${\cal M}_K$. If $K$ is compact, then the same moduli space classifies 
\begin{enumerate}
\item $\alpha_\H$-invariant gauge classes of $K$-connections,
\item $\alpha_\H$-invariant isomorphism classes of pairs $(Q,P)$ consisting of a holomorphic $K^\C$-bundle $Q$ and a differentiable $K$-reduction $P$ of $Q$. 
\end{enumerate}
\end{co}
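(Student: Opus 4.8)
The plan is to assemble the three identifications from the pieces already established, checking that the hypotheses of each cited result are met for the pair $(A,H_0)=(\mathrm{PSL}(2),\SO(2)/\{\pm 1\})$. First I would verify that the Lie algebra $\ag=\mathfrak{sl}(2,\R)$ is semisimple with a single simple summand which is non-compact, and that the stabilizer $H_0\cong\SO(2)/\{\pm 1\}$ is compact, so that by Lemma \ref{functoriality}(1) the pair $(\mathrm{PSL}(2),H_0)$ satisfies condition \eqref{C}. Hence all the hypotheses of Corollary \ref{finalcoro} are in force for the action $\alpha_\H$ of $\mathrm{PSL}(2)$ on $\H$. Applying Corollary \ref{finalcoro} gives $\Phi_{{\widetilde\alpha}_\H,K}\simeq{\cal F}_{\alpha_\H,K}\simeq{\cal M}(\widetilde{\mathrm{PSL}}(2),{\widetilde H}_0,K)$, and, since ${\widetilde H}_0$ is simply connected (it is the preimage of $\SO(2)/\{\pm1\}$ in $\widetilde{\mathrm{PSL}}(2)=\widetilde{\mathrm{SL}}(2,\R)$, hence a connected one-dimensional, hence simply connected, subgroup), the group-morphism description collapses to the Lie-algebra-morphism description displayed just before the corollary.

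Next I would carry out the explicit reduction of that quotient to ${\cal M}_K$, which is exactly the computation already written out in the paragraphs preceding the statement: a Lie algebra morphism $\chi:\hg_0\to\kg$ is determined by the single vector $\chi_0:=\chi(h_0)$ with no further constraint (since $\hg_0=\R h_0$ is abelian), the complement $\mathfrak{h}_0^\bot$ identifies $\Hom(\mathfrak{sl}(2,\R)/\hg_0,\kg)$ with pairs $(B,C)\in\kg\times\kg$, and the equivariance condition $\mu\circ\ad_h=\ad_{\chi(h)}\circ\mu$ unwinds via the bracket relations $[h_0,\mathrm{diag}(1,-1)]=2\,\binom{0\ 1}{1\ 0}$ and $[h_0,\binom{0\ 1}{1\ 0}]=-2\,\mathrm{diag}(1,-1)$ to $[\chi_0,B]=2C$, $[\chi_0,C]=-2B$; setting $A:=C+iB\in\kg\otimes\C$ packages these as the single equation $[\chi_0,A]=2iA$ of \eqref{eq}. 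The $K$-action is by simultaneous conjugation on $(\chi_0,B,C)$, equivalently on $(\chi_0,A)$, so the quotient is precisely ${\cal M}_K$. This yields the first assertion.

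For the second part, assume $K$ compact. Claim (1), that ${\cal M}_K$ also classifies $\alpha_\H$-invariant gauge classes of $K$-connections, is immediate from the identification ${\cal F}_{\alpha_\H,K}\simeq{\cal M}_K$ already obtained, since by definition ${\cal F}_{\alpha_\H,K}$ is the set of such gauge classes. For claim (2), I would invoke the discussion at the end of Section \ref{intro}: since $\H$ carries the $\alpha_\H$-invariant complex structure, the elements of the type-$(1,1)$ subset ${\cal F}_{\alpha_\H,K}^{1,1}$ correspond bijectively, via the Chern correspondence, to $\alpha_\H$-invariant holomorphic-isometry classes of pairs $(Q,P)$ with $Q$ a holomorphic $K^\C$-bundle and $P$ a $K$-reduction. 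The point to check is that on $\H$ \emph{every} invariant connection is automatically of type $(1,1)$, so ${\cal F}_{\alpha_\H,K}^{1,1}={\cal F}_{\alpha_\H,K}$; this holds because $\H$ is a complex curve, so the curvature $F_\Gamma\in A^2(\ad P)$ is a $2$-form on a real surface and is therefore of pure type $(1,1)$ for trivial dimension reasons (there are no $(2,0)$ or $(0,2)$ forms on a Riemann surface). Combining, ${\cal M}_K$ also classifies the pairs $(Q,P)$.

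The main obstacle is a bookkeeping one rather than a conceptual one: one must make sure the simple-connectedness of ${\widetilde H}_0$ and the non-compactness of the simple summand of $\mathfrak{sl}(2,\R)$ are genuinely verified so that both Corollary \ref{finalcoro} and the passage to the Lie-algebra quotient in Remark \ref{WithAlg} apply, and one must be careful that the identification $\mathrm{PSL}(2)/(\SO(2)/\{\pm1\})\cong\H$ is the standard one so that $\alpha_\H$ really is the action for which $H_0$ is the stabilizer used throughout. The bracket computation and the repackaging into \eqref{eq} are routine and already essentially done in the text; no hard analysis enters.
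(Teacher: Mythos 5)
Your proposal is correct and follows essentially the same route as the paper: verify the hypotheses of Corollary \ref{finalcoro} for $(\mathrm{PSL}(2),\SO(2)/\{\pm1\})$, pass to the Lie-algebra description via the simple connectedness of ${\widetilde H}_0$, and carry out the bracket computation reducing the quotient to ${\cal M}_K$. Your explicit observation that ${\cal F}_{\alpha_\H,K}^{1,1}={\cal F}_{\alpha_\H,K}$ because $\H$ is a one-dimensional complex manifold (so every curvature $2$-form is of type $(1,1)$) is exactly the point needed for claim (2), which the paper leaves implicit.
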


\subsection{The case of complex homogeneous complex manifolds}

We begin with the following important

\begin{dt}
{\rm Let $K$ be a compact Lie group and $X$ a complex manifold. A} 
Hermitian holomorphic $K$-bundle {\rm is a pair $(Q,P)$ 
consisting of a holomorphic $K^\C$-bundle $Q\,\longrightarrow\, M$ and a 
$K$-reduction $P$ of $Q$. An isomorphism (or an isometric 
biholomorphic isomorphism) of Hermitian holomorphic $K$-bundles 
$(Q,P)$, $(Q',P')$ is a holomorphic isomorphism $f\,:\,Q\,\longrightarrow\, Q'$ such 
that $f(P)\,=\,P'$.}
\end{dt}

Suppose that the conditions of Corollary \ref{finalcoro} are 
satisfied, $K$ is compact, and $X$ possesses an $A$-invariant 
complex structure. The classification of isomorphism classes of 
Hermitian holomorphic $K$-bundles $(Q,P)$ on $X$ reduces to the classification of 
$\alpha$-invariant gauge classes of $K$-connections which are
of   type $(1,1)$ (defined in Section \ref{intro}). The 
condition of being type $(1,1)$ produces an explicit algebraic equation on the space
of pairs $(\chi,\mu)$ (as in Corollary \ref{finalcoro}). This equation has a simple form when the tautological equivariant $H_0$-bundle $(A,\lambda)$ over $X$ has a $\lambda$-invariant connection $\Gamma_0$, which is itself of  type $(1,1)$. This is the case when $X$ is an irreducible Hermitian symmetric space of non-compact type (see \cite{Bi2}).

Suppose that the pair $(A,H_0)$ is reductive (i.e., it satisfies condition (\ref{C})).
Let $\sg$ be a $H_0$-invariant complement of $\hg_0$ in $\ag$ and $\Gamma_0$ the 
corresponding invariant connection on the $H_0$-bundle $A\,\longrightarrow\, X$. When
$\Gamma_0$ is 
of type $(1,1)$, all the induced connections $(\psi_{y_0})_*(\Gamma_0)$ (see the proof of 
Theorem \ref{main}) will also be of type $(1,1)$. For a pair $(\chi,\mu)\,\in\,\Hom( 
H_{0},K)\times \Hom(\ag/\hg_0,\kg)$ with $\mu \circ \ad_h \,=\,\ad_{\chi(h)}\circ \mu \ 
\forall\, h\in H_0$, the condition that the associated connection is of type $(1,1)$
reduces to the following condition:
\begin{equation}\label{int}\bigg[D_0(\eta_\mu)+
\frac{1}{2}[\eta_\mu\wedge\eta_\mu]\bigg]^{2,0}+
\bigg[D_0(\eta_\mu)+\frac{1}{2}[\eta_\mu\wedge\eta_\mu]\bigg]^{0,2}\,=\,0\ ,
\end{equation}
where $D_0$ is the exterior covariant derivative  \cite{KN} associated with the type $(1,1)$
connection $(\psi)_*(\Gamma_0)$, and $\eta_\mu$ is the $\beta$-invariant
tensorial 1-form of type $\ad$ associated with $\mu$ (see the proof of Theorem \ref{main}).
The tensorial 2-form $D_0(\eta_\mu)+\frac{1}{2}[\eta_\mu\wedge\eta_\mu]$ is determined by its value at $y_0\,=\,[(e,e)]\,\in\, A\times_\chi K$, which is a 
skew-symmetric bilinear map $T_{y_0}(P)\times T_{y_0}(P)\,\longrightarrow\, \kg$ whose
pull-back via $\psi_{y_0}$ is a bilinear map $\ag\times\ag\,\longrightarrow\, \kg$ vanishing when one of the arguments belongs to $\hg_0$. 
\begin{lm}\label{D0}
The restriction to $\sg\times\sg$ of the skew-symmetric bilinear map $\ag\times\ag\,\longrightarrow\, \kg$ induced by  $D_0(\eta_\mu)+\frac{1}{2}[\eta_\mu\wedge\eta_\mu]$ via $ \psi_{y_0}$ is given by
$$(\xi,\zeta)\,\longmapsto\,
-\mu([\xi,\zeta])+[\mu(\xi),\mu(\zeta)]\ .
$$

\end{lm}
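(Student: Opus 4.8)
The plan is to compute the two terms $D_0(\eta_\mu)$ and $\tfrac12[\eta_\mu\wedge\eta_\mu]$ separately, pull them back via $\psi_{y_0}$ to bilinear maps on $\ag\times\ag$, and restrict to $\sg\times\sg$. The second term is the easy one: since $\eta_\mu$ is the tensorial $\ad$-form associated with $\mu$, its pull-back $\psi_{y_0}^*(\eta_\mu)$ is, by the construction in the proof of Theorem \ref{main} and the identification of Lemma \ref{invtensorial}, the linear map $\ag\,\longrightarrow\,\kg$ that agrees with $\mu$ on $\sg$ (via $\ag/\hg_0\cong\sg$) and vanishes on $\hg_0$. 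Hence the pull-back of $\tfrac12[\eta_\mu\wedge\eta_\mu]$ sends $(\xi,\zeta)\in\sg\times\sg$ to $[\mu(\xi),\mu(\zeta)]$, using that $\tfrac12[\omega\wedge\omega](\xi,\zeta)=[\omega(\xi),\omega(\zeta)]$ for a $1$-form $\omega$ with the standard bracket-wedge convention.

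For the first term I would use the structure equation / the standard formula for the exterior covariant derivative of a tensorial form. For a tensorial $1$-form $\eta$ of type $\ad$ one has $D_0\eta = d\eta + [\Gamma_0\wedge\eta]$ where $\Gamma_0=(\psi_{y_0})_*(\Gamma_0)$ is viewed as the connection form; equivalently, on horizontal vectors $D_0\eta(\xi,\zeta) = d\eta(h\xi,h\zeta)$. The key point is that the horizontal distribution of $(\psi_{y_0})_*(\Gamma_0)$ at $y_0$ is exactly $(\psi_{y_0})_*(\sg)$, by the very definition of $\Gamma_0$ as the invariant connection with horizontal space $\sg$ at $e$ (this is the content of the Remark following Theorem \ref{main}). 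So when I pull back along $\psi_{y_0}$ and restrict to $\sg\times\sg$, the arguments are already horizontal and the $[\Gamma_0\wedge\eta]$ correction drops out: $(\psi_{y_0})^*(D_0\eta_\mu)(\xi,\zeta) = (\psi_{y_0})^*(d\eta_\mu)(\xi,\zeta)$ for $\xi,\zeta\in\sg$. Now $(\psi_{y_0})^*(\eta_\mu)$ is the \emph{constant} (left-invariant) $\kg$-valued $1$-form on $A$ extending $\mu\colon\ag\to\kg$ (constant because $\eta_\mu$ is $\beta$-invariant and $\psi_{y_0}$ is $A$-equivariant for left translation), so by the Maurer--Cartan-type formula $d(\psi_{y_0}^*\eta_\mu)(\xi,\zeta) = -\psi_{y_0}^*\eta_\mu([\xi,\zeta]) = -\mu([\xi,\zeta])$, where here $\mu$ denotes the original map on $\ag$ — note $[\xi,\zeta]$ need not lie in $\sg$, but $\mu$ is defined on all of $\ag$ and kills $\hg_0$, so only the $\sg$-component of $[\xi,\zeta]$ contributes, consistently with $\mu$ being the descended map on $\ag/\hg_0$. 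Adding the two contributions gives $-\mu([\xi,\zeta]) + [\mu(\xi),\mu(\zeta)]$, as claimed.

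The main obstacle I anticipate is bookkeeping rather than conceptual: one must be careful about sign and factor conventions (the convention for $[\eta\wedge\eta]$, the sign in $D\eta = d\eta+[\Gamma\wedge\eta]$, and the Maurer--Cartan sign in $d\omega+\tfrac12[\omega\wedge\omega]=0$ for the left-invariant form), and one must justify cleanly that left-invariance of $\eta_\mu$ transports under $\psi_{y_0}$ to left-invariance on $A$ so that $\psi_{y_0}^*\eta_\mu$ is genuinely constant and the exterior-derivative computation reduces to the Lie-bracket term with no extra derivative contributions. Once the horizontality of $\sg$ at $y_0$ is invoked to discard the connection-form correction term, the rest is the standard computation of $d+\tfrac12[\cdot\wedge\cdot]$ on a left-invariant $1$-form, evaluated on $\sg$.
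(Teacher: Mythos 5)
Your proposal is correct and follows essentially the same route as the paper: identify $\psi_{y_0}^*\eta_\mu$ with the left-invariant $\kg$-valued $1$-form extending $\mu$, note that the left-invariant fields generated by $\sg$ are $\Gamma_0$-horizontal so that $D_0$ reduces to $d$ on these arguments, and then apply the standard formula $d\omega(\tilde\xi,\tilde\zeta)=-\omega([\tilde\xi,\tilde\zeta])$ together with the evaluation $\tfrac12[\eta_\mu\wedge\eta_\mu](\xi,\zeta)=[\mu(\xi),\mu(\zeta)]$. Your attention to the sign conventions and to the fact that only the $\sg$-component of $[\xi,\zeta]$ contributes matches the paper's own remarks.
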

\begin{proof} The exterior covariant derivative $D_0(\eta_\mu)$ with respect to the connection  $\psi_*(\Gamma_0)$  of $\eta_\mu$ corresponds to the exterior covariant derivative  with respect to $\Gamma_0$ of the pull-back of $\eta_\mu$ via the bundle morphism $\psi$. This pull-back   is a  tensorial 1-form of type $\chi_*\circ\ad$  on the total space $A$ of the $H_0$-bundle $A\to X$, and coincides with  the  left invariant $\kg$-valued  1-form $\tilde\mu$ on $A$ which extends   $\mu:T_aA=\ag\to\kg$.  It suffices to compute   $(D_{\Gamma_0}\tilde\mu)(\xi,\zeta)$ for two tangent vectors $\xi$, $\zeta\in\sg$.  Let $\tilde\xi$, $\tilde \zeta$ be  the  left invariant vector fields on $A$ determined by $\xi$, $\zeta$.  Since $\tilde \xi$ and $\tilde \zeta$ are $\Gamma_0$-horizontal, and the functions $\tilde\mu(\tilde \xi)$, $\tilde\mu(\tilde\zeta):A\longrightarrow\R$ are constant, we get
$$D_{\Gamma_0}(\tilde\mu)(\tilde\xi,\tilde\zeta)= (d(\tilde\mu))(\tilde\xi,\tilde\zeta)=\tilde \xi(\tilde\mu(\tilde\zeta))-\tilde\zeta(\tilde\mu(\tilde\xi))-\tilde\mu([\tilde\xi,\tilde\zeta])=- \tilde\mu([\tilde\xi,\tilde\zeta])\ . 
$$
This shows that  $D_{\Gamma_0}(\tilde\mu)(\xi,\zeta)=-\mu([\xi,\zeta])$  where $[\xi,\zeta]$ denotes the Lie bracket of $\xi$, $\zeta$ in the Lie algebra $\ag$. Note that  $\mu([\xi,\zeta])$ depends only on the $\sg$-component (the horizontal component) of $[\xi,\zeta]$.

\end{proof}

Using Lemma \ref{D0} and our results about the classification of invariant connections we obtain:

\begin{thry}\label{final}
Let $\sg$ be a $H_0$-invariant complement of
$\hg_0$ in $\ag$ endowed with a complex structure $J\,\in\,\End(\sg)$ such that
\begin{enumerate}
\item The invariant almost complex structure determined by $J$ on $X\,=\,A/H_0$ is
integrable.
\item \label{secondC} The curvature of the connection $\Gamma_0$ on the $H_0$-bundle
$A\,\longrightarrow\, X\,=\,A/H_0$ is of Hodge type $(1,1)$.
\end{enumerate}

Let $K$ be a compact Lie group. Then the equivalence classes of triples $(Q,P,\beta)$ consisting of a holomorphic $K$-bundle $(Q,P)$ on $X$, and an $\alpha$-lifting
action $\beta$ by holomorphic $K$-bundle isomorphisms, correspond bijectively to
the points of the quotient ${\cal M}(A,H_0,K,\sg,J)\,\subset\, {\cal M}(A,H_0,K)$
defined by
$${\cal M}(A,H_0,K,\sg,J)\,:=\,
\bigg\{(\chi,\mu)\in\Hom(H_{0},K)\times \Hom(\ag/\hg_0,\kg)\big|\ \ \ \ \ \ \ \ \ \ \ \ \ \ \ \ \ \ \ \ \ \ \
$$
$$
\ \ \ \ \ \ \ \ \ \ \ \ \ \ \ \ \ \ \ \ \ \ \ \ \ \ \ \ \ \ \ \ \ \ \ \ \ \ \ \ \mu \circ \ad_h =\ad_{\chi(h)}\circ \mu \ \forall\, h\in H_0,\ \Fg_J(\mu)=0\bigg\}/K
$$
where the map $\Fg_J\,:\,\Hom(\ag/\hg_0,\kg)\,\longrightarrow\, \mathrm{Alt}^2(\sg,\kg)$ is defined by
$$\Fg_J(\mu)(\xi,\zeta)\,:=\,-
\mu([\xi,\zeta])+[\mu(\xi),\mu(\zeta)]+\mu([J\xi,J\zeta])-[\mu(J\xi),\mu(J\zeta)]\ .
$$

If, moreover, $\ag$ is semisimple and all the simple summands of $\ag$ are of non-compact type, then the $\alpha$-invariant isomorphism classes of holomorphic $K$-bundles on $X$ correspond bijectively to the points of the quotient ${\cal M}(A,{\widetilde H}_0,K,\sg,J)$, where $c\,:\,{\widetilde A}\,\longrightarrow\, A$ is the universal cover of $A$ and
${\widetilde H}_0\,:=\,c^{-1}(H_0)$.
\end{thry}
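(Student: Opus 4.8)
The plan is to deduce the first assertion from Theorem~\ref{main} together with the Chern correspondence, and the second from it by passing to the universal cover and invoking Corollary~\ref{finalcoro}. First I would record the differential-geometric dictionary. Since $K$ is compact and $X$ carries the $\alpha$-invariant complex structure determined by $J$ (integrable by hypothesis~(1)), the Chern correspondence gives, for every principal $K$-bundle $P$ on $X$, a bijection between $K$-connections $\Gamma$ on $P$ of type $(1,1)$ and holomorphic structures on the $C^\infty$ principal $K^\C$-bundle $Q:=P\times_K K^\C$, with its tautological $K$-reduction $P$. I would check that under this correspondence a $\beta$-invariant connection of type $(1,1)$ is matched with a holomorphic structure on $Q$ for which the induced $A$-action acts by holomorphic bundle isomorphisms, and that equivalences correspond on both sides: if $\beta$ preserves $\Gamma$ then, each $f_a$ being biholomorphic, the lift of $\beta_a$ to $Q$ preserves the $(0,1)$-part of the connection and hence is holomorphic; conversely, if $\beta$ acts holomorphically on $Q$ and preserves $P$, it preserves the Chern connection, which is uniquely determined by $(Q,P)$. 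Consequently the equivalence classes of triples $(Q,P,\beta)$ in the statement are in natural bijection with the subset $\Phi_{\alpha,K}^{1,1}\subset\Phi_{\alpha,K}$ of equivalence classes of $\alpha$-invariant $K$-connections $(P,\beta,\Gamma)$ with $F_\Gamma$ of Hodge type $(1,1)$.

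Next I would transport $\Phi_{\alpha,K}^{1,1}$ to ${\cal M}(A,H_0,K)$ by the bijection $C_{x_0}$ of Theorem~\ref{main}, which applies because $\sg$ is an $\ad_{H_0}$-invariant complement of $\hg_0$ in $\ag$, i.e.\ condition~\eqref{C} holds. If $(P,\beta,\Gamma)$ corresponds to $(\chi,\mu)$ then $\Gamma=(\psi_{y_0})_*(\Gamma_0)+\eta_\mu$, so by the structure equation (cf.\ the discussion around \eqref{int}) one has $F_\Gamma-F_{(\psi_{y_0})_*(\Gamma_0)}=D_0(\eta_\mu)+\frac{1}{2}[\eta_\mu\wedge\eta_\mu]$. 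Here $F_{(\psi_{y_0})_*(\Gamma_0)}$ is of type $(1,1)$: it is the image of $F_{\Gamma_0}$ under the fibrewise-linear bundle map induced by the $\id_X$-covering morphism $\psi_{y_0}$ of type $\chi_{y_0}$, and composing a vector-valued $2$-form with a fibrewise-linear map does not alter its Hodge type, so hypothesis~(2) applies. Since $\Gamma$ is $\beta$-invariant, $F_\Gamma$ is $\beta$-invariant, and as $\alpha$ is transitive and $J$-holomorphic, $F_\Gamma$ is of type $(1,1)$ everywhere iff it is so at $x_0$. Recalling that a real $2$-form $\omega$ has vanishing $(2,0)$- and $(0,2)$-parts precisely when $\omega(J\xi,J\zeta)=\omega(\xi,\zeta)$ for all $\xi,\zeta$, and identifying $T_{x_0}X$ with $(\sg,J)$, this holds iff the bilinear map $(\xi,\zeta)\mapsto-\mu([\xi,\zeta])+[\mu(\xi),\mu(\zeta)]$ on $\sg\times\sg$ supplied by Lemma~\ref{D0} is $J$-invariant --- which is exactly $\Fg_J(\mu)=0$. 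Thus $C_{x_0}$ restricts to a bijection $\Phi_{\alpha,K}^{1,1}\stackrel{\simeq}{\longrightarrow}{\cal M}(A,H_0,K,\sg,J)$, and together with the first paragraph this proves the first assertion.

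For the last assertion I would pass to the universal cover $c\,:\,\widetilde A\to A$. By Lemma~\ref{functoriality}(2) the pair $(\widetilde A,\widetilde H_0)$ satisfies \eqref{C} with invariant complement $c_*^{-1}(\sg)$, which $c_*$ carries isomorphically onto $\sg$; hence $J$ still defines an integrable invariant complex structure on $X$ and the associated invariant connection on $\widetilde A\to X$ is $c^*\Gamma_0$, whose curvature remains of type $(1,1)$. So hypotheses (1)--(2) persist for $(\widetilde A,\widetilde H_0,\sg,J)$ and the first assertion, applied to $\widetilde\alpha$, yields $\Phi_{\widetilde\alpha,K}^{1,1}\stackrel{\simeq}{\longrightarrow}{\cal M}(\widetilde A,\widetilde H_0,K,\sg,J)$. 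On the other hand, since $\ag$ is semisimple with all simple summands non-compact and $K$ is compact, Corollary~\ref{finalcoro} gives a bijection $\Phi_{\widetilde\alpha,K}\stackrel{\simeq}{\longrightarrow}{\cal F}_{\alpha,K}$ through $[P,\beta,\Gamma]\mapsto[\Gamma]$; this map leaves the Hodge type of $F_\Gamma$ unchanged, hence restricts to $\Phi_{\widetilde\alpha,K}^{1,1}\stackrel{\simeq}{\longrightarrow}{\cal F}_{\alpha,K}^{1,1}$, and by Section~\ref{intro} the target is the set of $\alpha$-invariant isomorphism classes of holomorphic $K$-bundles on $X$. Composing the two bijections finishes the proof.

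The step I expect to be the main obstacle is the transport step of the second paragraph: reducing "$F_\Gamma$ of type $(1,1)$" globally to the single pointwise statement at $x_0$, and matching that statement --- via Lemma~\ref{D0} and hypothesis~(2) --- to the algebraic equation $\Fg_J(\mu)=0$. The difficulty is purely bookkeeping: tracking the push-forward of the curvature under the bundle morphism $\psi_{y_0}$, the normalization $\Gamma=(\psi_{y_0})_*(\Gamma_0)+\eta_\mu$ from Theorem~\ref{main}, and the identification $T_{x_0}X\cong(\sg,J)$, so that every type decomposition is taken with respect to one and the same complex structure.
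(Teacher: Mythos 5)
Your proposal is correct and follows essentially the same route as the paper: the paper derives Theorem \ref{final} from the Chern correspondence, Theorem \ref{main}, the curvature identity \eqref{int} together with hypothesis (2), and Lemma \ref{D0}, exactly the chain you lay out (and your third paragraph supplies the intended, though unwritten, reduction of the second assertion to Corollary \ref{finalcoro} via the universal cover). Your version merely makes explicit the bookkeeping steps (invariance reducing the $(1,1)$ condition to the point $x_0$, and the $J$-invariance reformulation of type $(1,1)$) that the paper leaves implicit.
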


The main result of \cite{Bi2} can be recovered as a special case of this general theorem:

\begin{re}
{\rm Taking for $A$ a simple Lie group of non-compact type and 
for $H_0$ a maximal compact subgroup of $A$, we obtain the 
classification of $\alpha$-invariant classes of holomorphic 
$K$-bundles $(Q,P)$ on any irreducible symmetric Hermitian space 
of non-compact type. Condition (\ref{secondC}) required in 
Theorem \ref{final} is satisfied   (see \cite{Bi2}  and Theorem 9.6 in \cite{KN2}) 
so in this case the $\alpha$-invariant equivalence classes of 
pairs $(Q,P)$ correspond bijectively to the moduli space ${\cal 
M}(A,{\widetilde H}_0,K,\sg,J)$.}
\end{re}

\subsection{Non-transitive actions}

Let now $\alpha$ be a smooth action of a Lie group $A$ on a 
manifold $X$. Restricting an $\alpha$-invariant $K$-connection 
$(P,\beta,\Gamma)$ on $X$ to an orbit $Y\,\subset\, X$ of $\alpha$, 
one obtains an $\alpha_Y$-invariant $K$-connection on $Y$ endowed 
with the induced transitive action $\alpha_Y$. 

If $A$ acts on $X$ with compact stabilizers, then Theorem 
\ref{main} can be applied to all these transitive actions, and one obtains for every
orbit $Y$ an explicit description of the set $\Phi_{\alpha_Y,K}$ of equivalence
classes of $\alpha_Y$-invariant $K$-connections on $Y$ in terms of a moduli
space ${\cal M}_Y$, which is a $K$-quotient of a finite dimensional space. A natural
problem is to endow the union ${\cal M}\,:=\,
\coprod_{Y\in X/A} {\cal M}_Y$ with a natural topology such that the projection 
$$
r\,:\,{\cal M}\,\longrightarrow\, X/A
$$ 
is continuous, and such that every $\alpha$-invariant 
$K$-connection $(P,\beta,\Gamma)$ on $X$ defines a continuous 
section of $r$. We believe that a natural strategy for 
understanding the set $\Phi_{\alpha,K}$ of equivalence classes 
of $\alpha$-invariant $K$-connections on $X$ is to study the 
map $R$ which associates to every $\alpha$-invariant 
$K$-connection $(P,\beta,\Gamma)$ on $X$ the section of the 
fibration $r$ obtained by restriction to the orbits. The first 
step in this direction would be to describe explicitly the 
topology of the total space ${\cal M}$, and the image and the 
fibers of $R$.

\end{document}